\newtheorem{proposition}{Proposition}[section]
\newtheorem{theorem}[proposition]{Theorem}
\newtheorem{corollary}[proposition]{Corollary}
\newtheorem{definition}[proposition]{Definition}
\newtheorem{algorithm}[proposition]{Algorithm}
\newtheorem{example}[proposition]{Example}
\newenvironment{proof}{\noindent\textbf{Proof.}
  }{\hspace*{\fill} $\square$ \\[1em]}
\newcommand{\xbold}{{\bf x}}
\newcommand{\zbold}{{\bf z}}
\newcommand{\pbold}{{\bf p}}
\newcommand{\ubold}{{\bf u}}
\newcommand{\lbold}{{\bf l}}
\newcommand{\ybold}{{\bf y}}
\newcommand{\wbold}{{\bf w}}
\renewcommand{\Re}{{\mathbb R}}
\newcommand{\Z}{{\mathbb Z}}
\def\trt{^{\scriptscriptstyle T}}
\title{Computing all solutions of Nash equilibrium problems with discrete strategy sets}
\author{Simone Sagratella\thanks{The work of the author has been partially supported by Avvio alla Ricerca 2015 Sapienza University of Rome, under grant 488.}}
\affil{Department of Computer, Control and Management Engineering Antonio Ruberti, Sapienza University of Rome, Via Ariosto 25, 00185 Roma, Italy\\
sagratella@dis.uniroma1.it}
\begin{document}

\maketitle

\begin{abstract}
The Nash equilibrium problem is a widely used tool to model non-cooperative games. 
Many solution methods have been proposed in the literature to compute solutions of Nash equilibrium problems with continuous strategy sets, but, besides some specific methods for some particular applications, there are no general algorithms to compute solutions of Nash equilibrium problems in which the strategy set of each player is assumed to be discrete.
We define a branching method to compute the whole solution set of Nash equilibrium problems with discrete strategy sets. This method is equipped with a procedure that, by fixing variables, effectively prunes the branches of the search tree. Furthermore, we propose a preliminary procedure that by shrinking the feasible set improves the performances of the branching method when tackling a particular class of problems.
Moreover, we prove existence of equilibria and we propose an extremely fast Jacobi-type method which leads to one equilibrium for a new class of Nash equilibrium problems with discrete strategy sets. Our numerical results show that all proposed algorithms work very well in practice.
\end{abstract}

\section{Introduction}\label{sec: introduction}

The Nash equilibrium problem is a key model in game theory that has been widely used in many fields since fifties, see \cite{Nash50,Nash51}. Anyway, a strong interest in the numerical solution of realistic Nash games is a relatively recent research topic. Several algorithms have been proposed for the computation of one solution of Nash equilibrium problems, see e.g. monograph \cite{FacchPangBk} and the references therein, and of generalized Nash equilibrium problems, see e.g. \cite{FacchKanSu} and \cite{dreves2011solution,dreves2011nonsmooth,dreves2012nonsmooth,facchinei2011partial,facchlampsagr2011sur,facchinei2011computation,facchinei2014solving,NabTsengFuk09,FukPang09}.
All these methods assume that the feasible region of all players is continuous, and, besides some specific procedures for some particular applications and to the best of our knowledge, no numerical methods for the solution of any Nash equilibrium problem with discrete strategy spaces have been proposed so far. This constitutes an important gap in the literature, since, in general, it is not clear how it is possible to compute equilibria of many non-cooperative game models that are designed so that their variables represent indivisible quantities, see e.g. \cite{bikhchandani1997competitive,gabriel2013solving,gabriel2013solving2}.

Another important numerical topic is the computation of the whole solution set of Nash equilibrium problems. This issue becomes crucial when a selection of the equilibria is in order. Some methods were proposed for computing the whole equilibrium set of generalized Nash games with continuous strategy sets, see \cite{dreves2011nonsmooth,facchinei2011computation,NabTsengFuk09}, but, as said above, there are no general methods for discrete games.

In this work we present a branching method to compute all solutions of any Nash equilibrium problem with discrete strategy sets (Section \ref{sec: branching}). We define an effective fixing strategy which is useful in order to prune the branches of the search tree (Subsection \ref{subsec: proocedure F}). Moreover, we define an algorithmic framework for a class of Nash equilibrium problems with discrete strategy sets that, by using the branching method, efficiently yields the whole equilibrium set (Subsection \ref{subsec: algorithmic framework}). Furthermore, we define a new class of Nash equilibrium problems with discrete strategy sets for which: (i) we propose an extremely fast Jacobi-type algorithm to compute one equilibrium, (ii) we prove existence of equilibria, and (iii) we give an economic interpretation as a standard pricing game (Section \ref{sec:class}).

In Section \ref{sec: numerical}, we show that algorithms proposed in this paper work very well in practice. We deal with problems up to 1000 variables.

\section{Problem description and preliminary results}
\label{sec: preliminaries}

We consider a Nash equilibrium problem (NEP) with
 $N$ players and denote by
$ x^\nu \in \mathbb R^{n_\nu}$ the vector representing the
$\nu$-th player's strategy. 
We further define the vector
$\xbold^{-\nu} := (x^{\nu'})_{\nu\neq \nu' =1}^N$ and write $\mathbb R^{n}\ni\xbold :=
(x^\nu,\xbold^{-\nu})$, where $n := n_1 + \cdots + n_N$.

Each player has to solve an
optimization problem in which the objective function depends on
other players' variables, while the feasible set is defined by convex constraints depending on player's variables only,
plus integrality constraints:
\begin{equation} \label{eq: prob}
\begin{array}{l}
 \min_{x^\nu} \theta_\nu (x^\nu, \xbold^{-\nu}) \\[0.5em]
\quad g^\nu (x^\nu) \le 0 \\[0.5em]
\quad x^\nu \in \Z^{n_\nu},
\end{array} 
\end{equation}
where $\theta_\nu \, :\, \Re^{n} \, \rightarrow \, \Re$ and $g^\nu \, :\, \Re^{n_\nu} \, \rightarrow \, \Re^{m_\nu}$.
We call this problem as discrete NEP.

By relaxing integrality constraints in \eqref{eq: prob} we obtain a NEP in which each player $\nu$ solves the following optimization problem:
\begin{equation} \label{eq: continuous prob}
\begin{array}{l}
 \min_{x^\nu} \theta_\nu (x^\nu, \xbold^{-\nu}) \\[0.5em]
\quad g^\nu (x^\nu) \le 0,
\end{array} 
\end{equation}
we call this problem as continuous NEP.

Let us define
$$
X^\nu :=\{ x^\nu \in \Re^{n_\nu}: g^\nu (x^\nu) \leq 0\}, \qquad X := \prod_{\nu=1}^N X^\nu,
$$
a point $\xbold_*\in X \cap \Z^n$ is a solution (or an equilibrium) of the discrete
NEP if, for all $\nu$, $x_*^\nu$ is an optimal solution of problem (\ref{eq: prob})
when $\xbold^{-\nu}$ is fixed to $\xbold^{-\nu}_*$, that is:
$$
\theta_\nu (x^\nu_*, \xbold^{-\nu}_*) \leq \theta_\nu (x^\nu, \xbold^{-\nu}_*), \qquad \forall \, x^\nu \in X^\nu \cap \Z^{n_\nu}.
$$
On the other hand, a point $\bar \xbold \in X$ is a solution (or an equilibrium) of the continuous NEP if, for all $\nu$, $\bar x^\nu$ is an optimal solution of problem \eqref{eq: continuous prob}
when $\xbold^{-\nu}$ is fixed to $\bar \xbold^{-\nu}$, that is:
$$
\theta_\nu (\bar x^\nu, \bar \xbold^{-\nu}) \leq \theta_\nu (x^\nu, \bar \xbold^{-\nu}), \qquad \forall \, x^\nu \in X^\nu.
$$
For the sake of simplicity, in this paper we make the following blanket assumptions for each player $\nu$: $\theta_\nu$ is continuously differentiable and it is convex as a function of $x^\nu$ alone, and $g^\nu$ is continuously differentiable and convex.

Let $F$ be the operator comprised by the objective function gradients of all players:
$$
F^\nu (\xbold) := \nabla_{x^\nu} \theta_\nu (\xbold), \qquad
F(\xbold) := \left(\begin{array}{c}
            F^1 (\xbold) \\
            \vdots \\
            F^N (\xbold)
           \end{array} \right).
$$
It is well known that if $JF(\xbold)$ is symmetric for all $\xbold \in X$, then a function $f : \Re^n \to \Re$ exists such that $\nabla_\xbold f(\xbold) = F(\xbold)$ for all $\xbold \in X$, and then the set of all equilibria of the continuous NEP, defined by \eqref{eq: continuous prob}, coincides with the solution set of the following optimization problem:
\begin{equation*}
\begin{array}{l}
 \min_{\xbold} f (\xbold) \\[0.5em]
\quad \xbold \in X.
\end{array} 
\end{equation*}
This nice connection does not hold for discrete NEPs. It is very easy to give an example of a discrete NEP with $JF(\xbold)$ symmetric for all $\xbold \in X$ and for which the solution set of the corresponding discrete optimization problem, that is
\begin{equation}\label{eq: optimization problem}
\begin{array}{l}
 \min_{\xbold} f (\xbold) \\[0.5em]
\quad \xbold \in X \cap \Z^n,
\end{array} 
\end{equation}
does not contain all discrete equilibria.

\begin{example}\label{ex: one}
There are two
players each controlling one variable. Players' problems are
\begin{align*}
 \displaystyle\min_{x^1} \, \theta_1(x^1,x^2) & = \frac{9}{2} (x^1)^2 + 7 x^1 x^2 - 72 x^1 \\
& 0 \le x^1 \le 9 \\
& x^1 \in \Z, \\[1em]
 \displaystyle\min_{x^2} \, \theta_2(x^1,x^2) & = \frac{9}{2} (x^2)^2 + 7 x^1 x^2 - 72 x^2 \\
& 0 \le x^2 \le 9 \\
& x^2 \in \Z.
\end{align*}
This discrete NEP has the following equilibria: $(3,6)$, $(4,5)$, $(5,4)$ and $(6,3)$, see Figure \ref{fig: ex1}.
\begin{figure}[!ht]
 \begin{minipage}{0.49\textwidth}
 \centering
 \hspace*{-2em}
 \includegraphics[width=1\textwidth]{./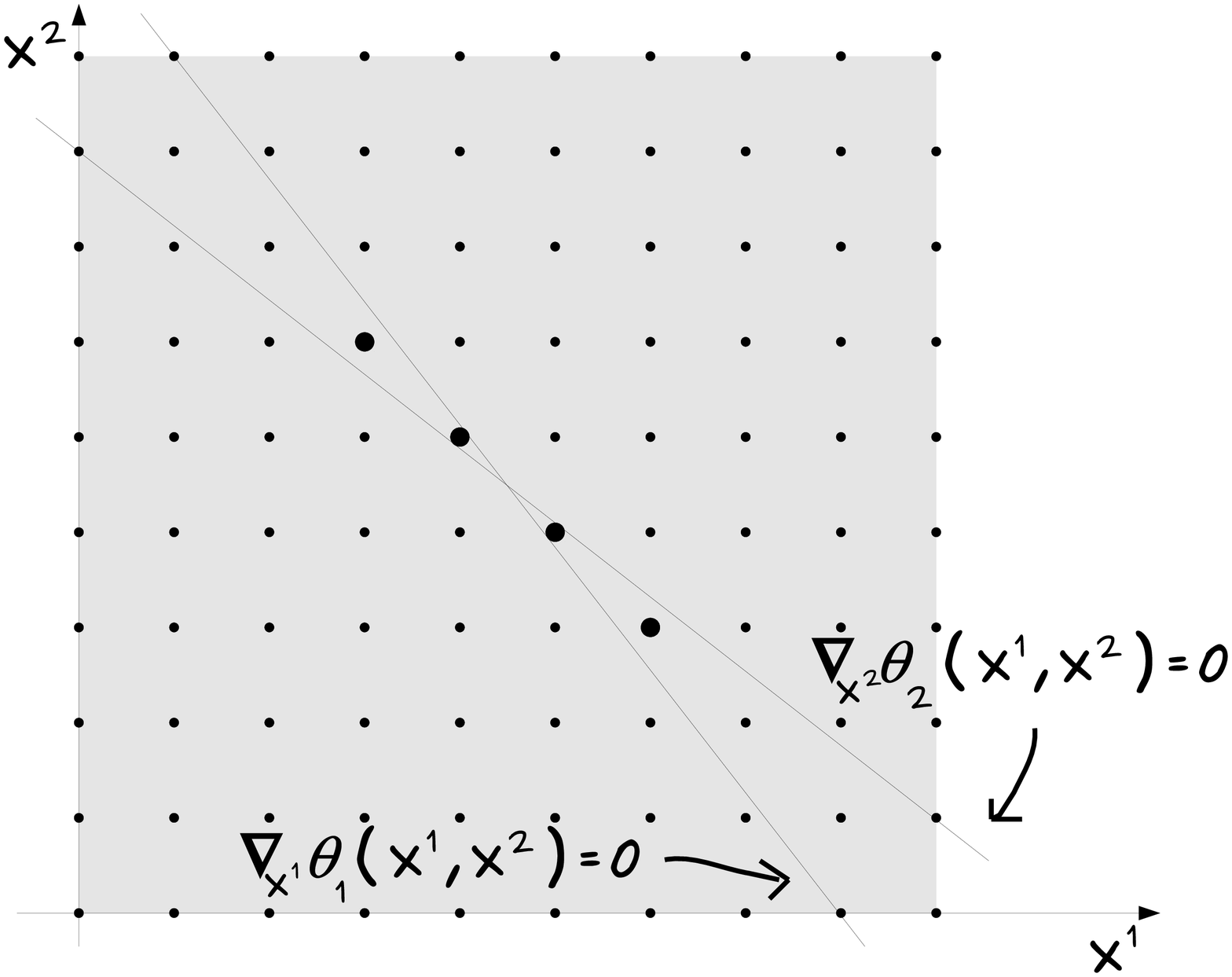}
 \caption{Equilibria for Example \ref{ex: one}.}
 \label{fig: ex1}
 \end{minipage}
 \begin{minipage}{0.49\textwidth}
 \centering
 \includegraphics[width=1\textwidth]{./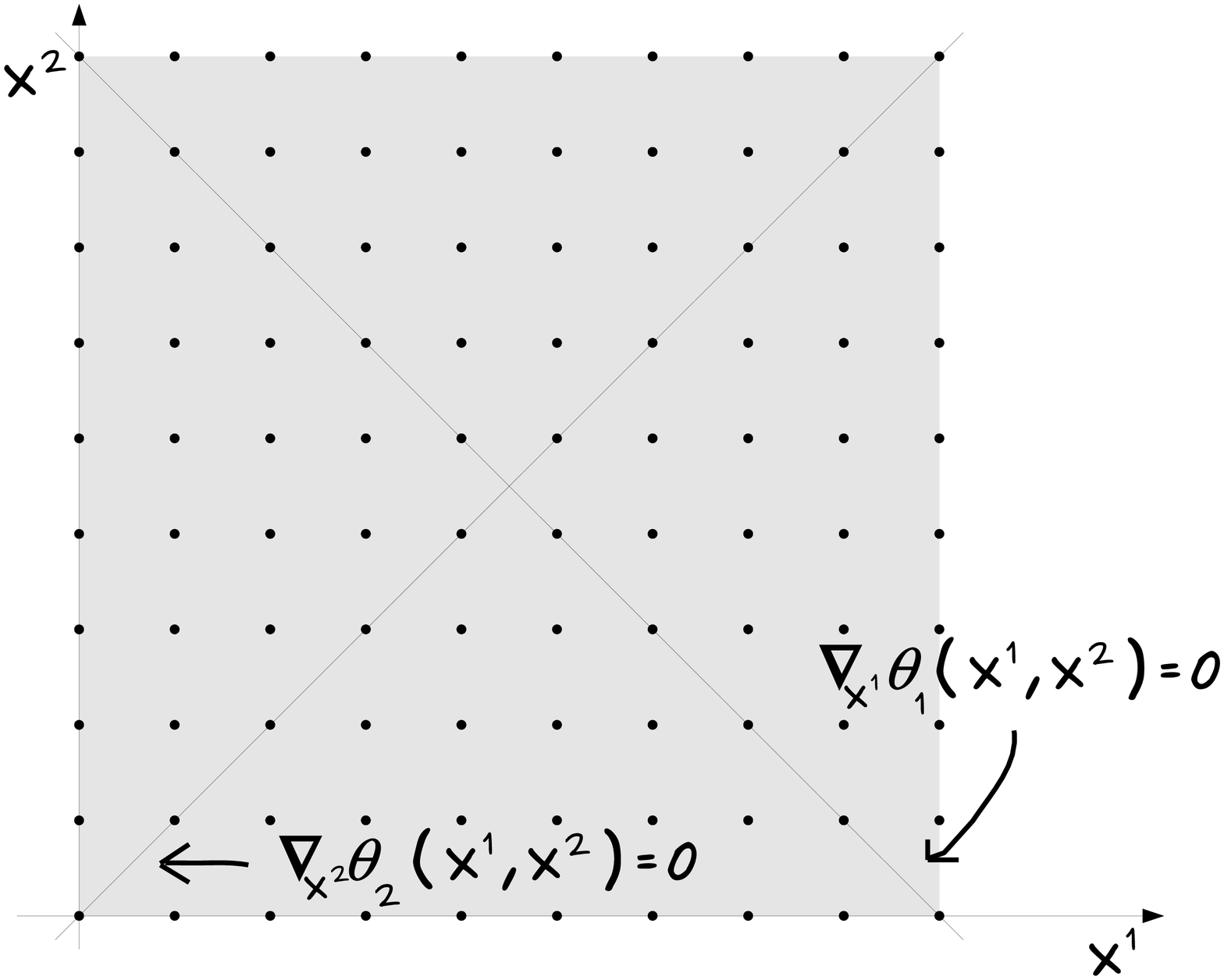}
 \caption{Lack of equilibria in Example \ref{ex: two}.}
 \label{fig: ex2}
\end{minipage}
\end{figure}
\noindent
Being $JF(x^1,x^2) = \left(\begin{array}{cc} 9 & 7 \\ 7 & 9 \end{array}\right)$ symmetric, function $f(x^1,x^2)=\frac{9}{2} (x^1)^2 + 7 x^1 x^2 + \frac{9}{2} (x^2)^2 - 72 x^1 - 72 x^2$ is such that $\nabla_{(x^1,x^2)} f(x^1,x^2) = F(x^1,x^2)$ for all $\xbold \in X$. However, the optimization problem defined by
$$
\begin{array}{c}
 \displaystyle\min_{(x^1,x^2)} \, f(x^1,x^2) = \frac{9}{2} (x^1)^2 + 7 x^1 x^2 + \frac{9}{2} (x^2)^2 - 72 x^1 - 72 x^2\\
\text{    } \quad 0 \le x^1 \le 9\\
\text{    } \quad 0 \le x^2 \le 9\\
\text{    } \quad \quad (x^1,x^2) \in \Z^2,
\end{array}
$$
has the following solutions: $(4,5)$ and $(5,4)$, but not $(3,6)$ and $(6,3)$.
% \hspace*{\fill}$\Box$
\end{example}
\medskip

\noindent
In any case, it is straightforward to prove the contrary, i.e. that all solutions of discrete optimization problem \eqref{eq: optimization problem} are equilibria for the discrete NEP defined by \eqref{eq: prob}.
And, therefore, we can say that a necessary, but not sufficient, condition for a point $\bar \xbold$ to be a solution of problem \eqref{eq: optimization problem} is to be a solution of the discrete NEP defined by \eqref{eq: prob}.

Reformulating a dicrete NEP by using KKT conditions or variational inequalities is not fruitful, since first order optimality conditions cannot be effectively used with discrete strategy sets. However it is easy to see that relaxing integrality and then solving the resulting continuous NEP, defined by \eqref{eq: continuous prob}, may produce an integer solution, which then is also a solution for the original discrete game defined by \eqref{eq: prob}. The following proposition, whose proof is trivial and therefore omitted, formalizes this issue.
\begin{proposition}\label{pro: relaxed solution}
 Let $\ybold_*\in X \cap \Z^n$ be such that for all $\nu \in \{1, \ldots, N\}$: 
 $$
 \theta_\nu (y^\nu_*, \ybold^{-\nu}_*) \leq \theta_\nu (x^\nu, \ybold^{-\nu}_*), \qquad \forall \, x^\nu \in X^\nu,
 $$
 that is $\ybold_*$ is a solution of the continuous NEP.
 Then it holds that for all $\nu \in \{1, \ldots, N\}$: 
 $$
 \theta_\nu (y^\nu_*, \ybold^{-\nu}_*) \leq \theta_\nu (x^\nu, \ybold^{-\nu}_*), \qquad \forall \, x^\nu \in X^\nu \cap \Z^{n_\nu},
 $$
 that is $\ybold_*$ is a solution of the discrete NEP.
 In particular, we say that $\ybold_*$ is a \emph{favorable solution} for the discrete NEP.
\end{proposition}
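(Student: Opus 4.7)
The plan is to observe that this proposition reduces to an elementary set-containment argument: the discrete feasible set of each player is contained in its continuous feasible set, so minimizing over the smaller set is automatic once we know we are minimizing over the larger one, provided the candidate point itself lies in the smaller set.

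More concretely, I would fix an arbitrary player index $\nu \in \{1, \ldots, N\}$ and proceed in three short steps. First, note that the hypothesis $\ybold_* \in X \cap \Z^n$ gives $y^\nu_* \in X^\nu$ and $y^\nu_* \in \Z^{n_\nu}$, so $y^\nu_* \in X^\nu \cap \Z^{n_\nu}$, i.e.\ $\ybold_*$ is discrete-feasible for player $\nu$. Second, the assumption that $\ybold_*$ solves the continuous NEP provides the inequality
\[
\theta_\nu(y^\nu_*, \ybold^{-\nu}_*) \leq \theta_\nu(x^\nu, \ybold^{-\nu}_*) \qquad \forall\, x^\nu \in X^\nu.
\]
Third, since $X^\nu \cap \Z^{n_\nu} \subseteq X^\nu$, the same inequality a fortiori holds for every $x^\nu \in X^\nu \cap \Z^{n_\nu}$, which is precisely the discrete equilibrium condition at player $\nu$. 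Since $\nu$ was arbitrary, $\ybold_*$ is a solution of the discrete NEP.

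There is essentially no obstacle: no convexity, differentiability, or integer-programming machinery is invoked, because the argument is purely about restricting the universal quantifier on $x^\nu$ to a subset. The only thing worth emphasizing is that the conclusion would fail without the assumption $\ybold_* \in \Z^n$, since otherwise $y^\nu_*$ need not be an admissible competitor in the discrete problem. This is exactly why the author labels such $\ybold_*$ a \emph{favorable solution}: it is a continuous equilibrium that happens to land on the integer lattice, and this coincidence alone suffices to transport the equilibrium property to the discrete game.
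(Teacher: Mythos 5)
Your proof is correct and is exactly the elementary restriction-of-quantifier argument the paper has in mind; indeed, the paper explicitly omits the proof as trivial for precisely this reason. Your remark that $\ybold_* \in \Z^n$ is the essential hypothesis (so that $y^\nu_*$ remains an admissible competitor in the discrete problem) is the right thing to emphasize.
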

\medskip

\noindent
In general, favorable solutions (defined in Proposition \ref{pro: relaxed solution}) are only a subset of the solution set of a discrete NEP. Moreover, very often, a discrete NEP can have more than one solution, but not favorable ones. In Example \ref{ex: one} this occurs, in that, none equilibrium of the discrete NEP is a favorable solution since, by relaxing the integrality contraints, the corresponding continuous NEP has the unique solution $(4.5,4.5) \notin \Z^2$, see Figure \ref{fig: ex1}.

It is not easy to give conditions ensuring the existence of solutions for discrete NEPs in their general form.
In fact, unlike for continuous problems, neither compactness of feasible set $X$, nor strong monotonicity of operator $F$, nor both together, can guarantee the existence of at least one solution, see Example \ref{ex: two}.
\begin{example}\label{ex: two}
There are two
players each controlling one variable. Players' problems are
$$
\begin{array}{cccc}
 \displaystyle\min_{x^1} \, \theta_1(x^1,x^2) = \frac{1}{2} (x^1)^2 + x^1 x^2 - 9 x^1 & & \quad &
\displaystyle\min_{x^2} \, \theta_2(x^1,x^2) = \frac{1}{2} (x^2)^2 - x^1 x^2\\
\text{   } \quad 0 \le x^1 \le 9 & & \quad & 
\quad\text{   } \quad 0 \le x^2 \le 9 \\
\text{    } \quad x^1 \in \Z  & & \quad & 
\quad\text{    } \quad x^2 \in \Z.
\end{array}
$$

\noindent
Note that $0 \le x^1 \le 9$ and $0 \le x^2 \le 9$ define a compact set and that operator $F$ is strongly monotone.
However the problem does not have any equilibrium, see Figure \ref{fig: ex2}.% \hspace*{\fill}$\Box$
\end{example}
\medskip

\noindent
The most obvious way in order to give sufficient conditions for the existence of equilibria of a discrete NEP is to prove that at least one favorable solution exists.
Following this reasoning, some researchers had considered discrete NEPs, then written down their KKT conditions, by previously relaxing integrality constraints, and finally given conditions ensuring that at least one of these KKT points is integer, see e.g. \cite{gabriel2013solving}.
As said above, favorable solutions are only a subset (often empty) of the whole solution set of a discrete NEP, therefore it is worth to give conditions for the existence of solutions without assuming that these are favorable ones.
We have to cite the works of Yang et al., \cite{van2007computing,yang2008solutions}, that, by developing a theory on discrete nonlinear complementarity problems, proposed an alternative way to guarantee the existence of at least one equilibrium of discrete NEPs coming from some economic applications. However, results given in \cite{yang2008solutions} are quite technical, and is rather difficult to use them in order to define general classes of problems for which existence can be proven.
One important contribution on this topic was given by Topkis, \cite{topkis1998supermodularity}, that, by studying supermodular games, defined a class of discrete NEPs that have at least one solution.
In Section \ref{sec:class}, we give conditions for the existence of equilibria (not only favorable ones) for a new class of discrete NEPs and we compare with results in \cite{topkis1998supermodularity} and in \cite{yang2008solutions}.

Notation: $M \in \mathbb{M}_{m \times n}$ is a matrix with $m$ rows and $n$ columns; $M_{j*}$ denotes the $j$-th row of $M$ and $M_{*i}$ denotes the $i$-th column of $M$; given a set of row indices $J_r$ and a set of column indices $J_c$, $M_{J_r J_c}$ is the submatrix with rows in $J_r$ and columns in $J_c$.
\section{A branching method for finding all equilibria}\label{sec: branching}

In this section we present a method for finding all solutions of the discrete NEP defined by \eqref{eq: prob}.
This method mimics the branch and bound paradigm for discrete and combinatorial optimization problems. It consists of a systematic enumeration of candidate solutions by means of strategy space search: the set of candidate solutions is thought of as forming a rooted tree with the full set at the root. The algorithm explores branches of this tree, and during the enumeration of the candidate solutions a procedure is used in order to reduce the strategy set, in particular by fixing some variables.

It is important to say that bounding strategies, which are standard for discrete and combinatorial optimization problems, in general cannot be easily applied for discrete NEPs because of the difficulty to compute a lower bound for a suitable merit function.
Standard merit fuctions for contiuous NEPs, see e.g. \cite{FacchPangBk}, not only cannot be directly applied in a discrete framework, but neither are easy to optimize.
So, computing a lower bound for a merit function in a subset of the feasible discrete region needs, except in special cases, a complete enumeration.
The situation is obviously much easier when $JF(\xbold)$ is symmetric for all $\xbold \in X$ since, as said before, a solution of the discrete NEP can be found by solving problem \eqref{eq: optimization problem}, and then $f(\xbold)$ can be an ``easy'' merit function. 
A similar situation also occurs in potential games, see \cite{FacchPiccScia11}.
However here we consider the general case and therefore given a subset of the strategy space it is very difficult to say if it cannot contain an equilibrium by using a standard bounding method.

The method we propose is defined below, but, before, we have to define some tools that it uses:
\begin{itemize}
 \item an oracle ${\cal O}$ that takes a point $\bar \xbold \in X \cap \Z^n$ and outputs YES if and only if $\bar \xbold$ is an equilibrium for the discrete NEP defined by \eqref{eq: prob};
 \item a procedure ${\cal S}$ that, given a convex set $Y$ such that
 \begin{equation}\label{eq: set Y}
  Y^\nu \subseteq X^\nu, \qquad Y := \prod_{\nu=1}^N Y^\nu,
 \end{equation}
 yields one equilibrium of the continuous NEP in which each player $\nu$ solves
 \begin{equation}\label{eq: continuous prob Y}
 \begin{array}{l}
  \min_{x^\nu} \theta_\nu (x^\nu, \xbold^{-\nu}) \\[0.5em]
  \quad x^\nu \in Y^\nu;
 \end{array}
 \end{equation}
 \item a procedure ${\cal F}$ that, given set $Y$ defined in \eqref{eq: set Y} and a point $\bar \xbold$ returned by procedure ${\cal S}$, yields a closed, possibly unbounded, box $B$ such that set $Y \setminus B$ does not contain any equilibrium of the discrete NEP defined by \eqref{eq: prob};
 \item a procedure ${\cal C}$ that, given set $Y$ defined in \eqref{eq: set Y} and a point $\bar \xbold \in Y \cap \Z^n$, yields $p$ closed, maybe unbounded, boxes $\bar B_i$ such that $\bar \xbold \notin \cup_{i = 1}^p \bar B_i$, that $\bar B_i \cap \bar B_j = \emptyset$ for all $i \neq j$, and that $\tilde \xbold \in \cup_{i = 1}^p \bar B_i$ for all $\tilde \xbold \in Y \cap \Z^n$, $\tilde \xbold \neq \bar \xbold$.
\end{itemize}
Later in this section we will discuss more in detail about these tools, now we are ready to define the branching method for finding all solutions of the discrete NEP defined by \eqref{eq: prob}.

\begin{algorithm}\label{alg: branching} (Branching Method)
 \begin{description}
  \item[(S.0)] Initialize list of strategy subsets ${\cal L} := \{ X \}$ and set of equilibria $E := \emptyset$.
  \item[(S.1)] Take from ${\cal L}$ a strategy set $Y$.
  \item[(S.2)] Use ${\cal S}$ to obtain a solution $\bar \xbold$ of the continuous NEP defined by \eqref{eq: continuous prob Y}.
  \item[(S.3)] Use ${\cal F}$, with $Y$ and $\bar \xbold$, to obtain box $B$.
  \item[(S.4)] If $\bar \xbold \in \Z^n$ then use ${\cal O}$ on $\bar \xbold$: if ${\cal O}$ says YES then put $\bar \xbold$ in $E$.
  \item[(S.5)] If $\bar \xbold \in \Z^n$ then use ${\cal C}$, with $Y$ and $\bar \xbold$, to obtain $p$ boxes $\bar B_i$;\\
  for all $i \in \{1, \ldots, p\}$, if $Y \cap B \cap \bar B_i \neq \emptyset$ then put it in ${\cal L}$;\\
  go to \textbf{\emph{(S.7)}}.
  \item[(S.6)] Find an index $i \in \{1, \ldots, n\}$ such that $\bar \xbold_i \notin \Z$;\\
  if $Y \cap B \cap \{\xbold \in \Re^n: \xbold_i \geq \lceil \bar \xbold_i \rceil \} \neq \emptyset$ then put it in ${\cal L}$;\\
  if $Y \cap B \cap \{\xbold \in \Re^n: \xbold_i \leq \lfloor \bar \xbold_i \rfloor \} \neq \emptyset$ then put it in ${\cal L}$.
  \item[(S.7)] If ${\cal L} = \emptyset$ then STOP, else go to \textbf{\emph{(S.1)}}.
 \end{description}
\end{algorithm}
\medskip

\noindent
Eventually Algorithm \ref{alg: branching} enumerates all points in $X \cap \Z^n$, checking their optimality, except those that are cut off by using procedure ${\cal F}$. Therefore, it is clear that: (i) if $X$ is compact, Algorithm \ref{alg: branching} computes the whole solution set of the discrete NEP and (ii) procedure ${\cal F}$ is crucial in order to obtain an efficient algorithm.

On the other hand, if we are interested in computing only one equilibrium of the discrete NEP, and not the whole solution set, then we can stop Algorithm \ref{alg: branching} as soon as it finds a solution, so considerably increasing efficiency of the method.

Now we describe in detail the tools used by Algorithm \ref{alg: branching}.

\subsection{Oracle $\cal O$}
Given a point $\bar \xbold \in \Z^n$, let us define the following best response at $\bar \xbold$ for each player $\nu$:
\begin{equation}\label{eq: best response}
 \hat x^\nu(\bar \xbold^{-\nu}) := \arg \min_{x^\nu \in X^\nu \cap \Z^{n_\nu}} \theta_\nu (x^\nu, \bar \xbold^{-\nu}).
\end{equation}
Oracle $\cal O$ must certify optimality of a point $\bar \xbold \in \Z^n$.
Therefore, for all $\nu$, it checks if $\bar x^\nu \in \hat x^\nu({\bar \xbold}^{-\nu})$, and, if it is true, then answers YES, otherwise answers NO. 
Note that, in practice, computing a point in $\hat x^\nu({\bar \xbold}^{-\nu})$ could be a demanding task, since, in general, it requires to use a Mixed Integer Non-Linear Programming tool, see e.g. \cite{belotti2013mixed,belotti2009branching,nowak2006relaxation,tawarmalani2002convexification}. In Section \ref{sec: numerical} we give specific implementation details on this issue.

\subsection{Procedure $\cal S$}
There are a lot of methods for finding a solution of a continuous NEP, see e.g. \cite{dirkse1995path,dreves2011solution,FacchPangBk}.
It is well known that if operator $F$ is monotone, or something a bit weaker, in the strategy set then there is more than one algorithm which is globally convergent to a solution of the continuous NEP. In Section \ref{sec: numerical} we give specific implementation details also on this issue.

\subsection{Procedure $\cal F$}\label{subsec: proocedure F}
As said above, procedure ${\cal F}$ is crucial in order to obtain an efficient algorithm.
First of all, let us analyze a situation that may set a trap. Suppose that $\bar \xbold \in \Z^n$ is a solution of the continuous NEP defined by \eqref{eq: continuous prob Y}, being $\bar \xbold$ integer then it is a favorable solution for the discrete NEP with strategy set $Y \subseteq X$, see Proposition \ref{pro: relaxed solution}. Moreover suppose that oracle $\cal O$ says that $\bar \xbold$ is not a solution of the discrete NEP defined by \eqref{eq: prob}, which has strategy set $X \supseteq Y$. Then one could think that this is enough to say that $Y$ cannot contain any equilibrium of the discrete NEP defined by \eqref{eq: prob}. But this is not true in general. Example \ref{ex: trap} shows this in a very simple setting even with $F$ strongly monotone.

\begin{example}\label{ex: trap}
There are two
players each controlling one variable. Players' problems are
$$
\begin{array}{cccc}
 \displaystyle\min_{x^1} \, \theta_1(x^1,x^2) = \frac{7}{16} (x^1)^2 - x^1 x^2 + \frac{1}{2} x^1 & & \quad &
\displaystyle\min_{x^2} \, \theta_2(x^1,x^2) = \frac{1}{2} (x^2)^2 - \frac{3}{4} x^1 x^2\\
\text{   } \quad -1 \le x^1 \le 2 & & \quad & 
\quad\text{   } \quad -1 \le x^2 \le 2 \\
\text{    } \quad x^1 \in \Z  & & \quad & 
\quad\text{    } \quad x^2 \in \Z.
\end{array}
$$

%\medskip
\begin{figure}[ht!]
 \centering
 \includegraphics[width=.6\textwidth]{./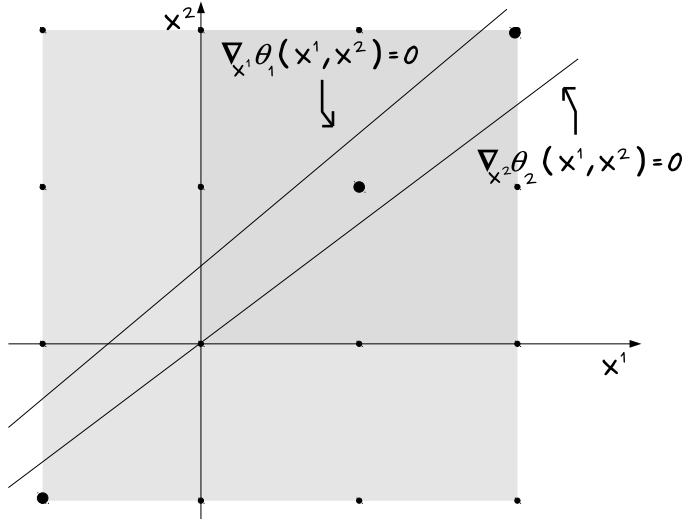}
 \caption{Equilibria for Example \ref{ex: trap}.}
 \label{fig: extrap}
\end{figure}

\noindent
Then $X = \{x^1, x^2 \in \Re: -1 \le x^1 \le 2, -1 \le x^2 \le 2 \}$ and it holds that
$$
JF(x^1,x^2) = \left(\begin{array}{cc} \;\;\, \frac{7}{8} & -1 \\[.5em] -\frac{3}{4} & \;\;\, 1 \end{array}\right) \succ 0.
$$
Let us indicate by $S \subseteq X \cap \Z^n$ the set of all solutions of this discrete NEP.
Now let us consider $Y = \{x^1, x^2 \in \Re: 0 \le x^1 \le 2, 0 \le x^2 \le 2 \} \subset X$ and indicate by $\bar S \subseteq X$ the set of all solutions of the continuous NEP with strategy set $Y$ and with the same objective functions.
Then it is easy to see that $S = \{(-1,-1), (1,1), (2,2)\}$ and $\bar S = \{(0,0)\}$, see Figure \ref{fig: extrap}.
Note that: $(0,0) \notin S$ and $S \supset \{(1,1), (2,2)\} \subset Y$.
%\hspace*{\fill}$\Box$
\end{example}
\medskip

\noindent
Therefore procedure ${\cal F}$ cannot use this simple strategy in order to indentify a subset which  provably does not contain any equilibrium. Procedure ${\cal F}$ we propose is motivated by the following proposition.

\begin{proposition}\label{pro: cut}
 Suppose that $X$ is defined by box constraints:
 \begin{equation}\label{eq: box}
 X := \left\{ \xbold \in \Re^n: \lbold \leq \xbold \leq \ubold \right\}, \qquad \lbold, \ubold \in \Z^n.  
 \end{equation}
 Let $\bar \xbold \in X \cap \Z^n$ be a solution for the continuous NEP defined by \eqref{eq: continuous prob}. 
 Let us consider a generic player $\nu$. Suppose that an index $i \in \{1, \ldots, n_\nu\}$ exists such that one of the following two possibilities holds:
 \begin{enumerate}[(i)]
  \item $F^\nu_i (\bullet)$ is a convex function,
  $\bar x^\nu_i = l^\nu_i$,  and
  for each player $\mu \in \{1, \ldots, N\}$ and each index $j \in \{1, \ldots, n_\mu\}$, such that $(\mu,j) \neq (\nu,i)$:
  \begin{itemize}
  \item if $\frac{\partial F^\nu_i (\bar \xbold)}{\partial x^\mu_j} > 0$ then $\bar x^\mu_j = l^\mu_j$,
  \item if $\frac{\partial F^\nu_i (\bar \xbold)}{\partial x^\mu_j} < 0$ then $\bar x^\mu_j = u^\mu_j$.
  \end{itemize}

  \item $F^\nu_i (\bullet)$ is a concave function,
  $\bar x^\nu_i = u^\nu_i$ and
  for each player $\mu \in \{1, \ldots, N\}$ and each index $j \in \{1, \ldots, n_\mu\}$, such that $(\mu,j) \neq (\nu,i)$:
  \begin{itemize}
  \item if $\frac{\partial F^\nu_i (\bar \xbold)}{\partial x^\mu_j} > 0$ then $\bar x^\mu_j = u^\mu_j$,
  \item if $\frac{\partial F^\nu_i (\bar \xbold)}{\partial x^\mu_j} < 0$ then $\bar x^\mu_j = l^\mu_j$.
  \end{itemize}
 \end{enumerate}
 If $\theta_\nu$ is strictly convex with respect to $x^\nu_i$,
 then any point $\tilde \xbold \in X \cap \Z^n$ such that $\tilde x^\nu_i \neq \bar x^\nu_i$ cannot be an equilibrium for the discrete NEP defined by \eqref{eq: prob}.
\end{proposition}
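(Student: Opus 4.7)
The plan is to prove case (i); case (ii) follows by the symmetric argument, i.e., applying the same reasoning to the convex function $-F^\nu_i$ with the roles of $\lbold$ and $\ubold$ exchanged. Because $\bar\xbold$ solves the continuous NEP over the box $X$ and $\bar x^\nu_i=l^\nu_i$, the first-order optimality condition for player $\nu$ along coordinate $i$ gives $F^\nu_i(\bar\xbold)\ge 0$. Now pick any $\tilde\xbold\in X\cap\Z^n$ with $\tilde x^\nu_i\ne\bar x^\nu_i$; since $\tilde x^\nu_i\ge l^\nu_i=\bar x^\nu_i$ and both values are integer, in fact $\tilde x^\nu_i\ge l^\nu_i+1$. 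Introduce the auxiliary point $\hat\xbold\in X\cap\Z^n$ obtained from $\tilde\xbold$ by resetting only its $(\nu,i)$-coordinate to $l^\nu_i$.

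The key step is to establish $F^\nu_i(\hat\xbold)\ge 0$. Convexity of $F^\nu_i$ yields the subgradient inequality
\[
F^\nu_i(\hat\xbold)\;\ge\;F^\nu_i(\bar\xbold)+\nabla F^\nu_i(\bar\xbold)\trt(\hat\xbold-\bar\xbold).
\]
The $(\nu,i)$-summand of the inner product vanishes because $\hat x^\nu_i=\bar x^\nu_i=l^\nu_i$, and for every remaining index $(\mu,j)$ the sign conditions of case (i) force $\partial F^\nu_i(\bar\xbold)/\partial x^\mu_j$ and $\tilde x^\mu_j-\bar x^\mu_j$ to share the same (weak) sign, so every remaining summand is $\ge 0$; combined with $F^\nu_i(\bar\xbold)\ge 0$ this yields $F^\nu_i(\hat\xbold)\ge 0$. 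Now let $h:[l^\nu_i,u^\nu_i]\to\R$ denote $\theta_\nu$ restricted to the $(\nu,i)$-coordinate with all other coordinates frozen at their $\hat\xbold$-values. Strict convexity of $\theta_\nu$ in $x^\nu_i$ makes $h$ strictly convex, hence $h'$ strictly increasing, and since $h'(l^\nu_i)=F^\nu_i(\hat\xbold)\ge 0$ we obtain $h'(s)>0$ for every $s>l^\nu_i$. Thus $h$ is strictly increasing on $[l^\nu_i,u^\nu_i]$, giving $\theta_\nu(\tilde\xbold)=h(\tilde x^\nu_i)>h(l^\nu_i)=\theta_\nu(\hat\xbold)$; since $\hat x^\nu\in X^\nu\cap\Z^{n_\nu}$, player $\nu$ strictly prefers $\hat x^\nu$ to $\tilde x^\nu$ at $\tilde\xbold^{-\nu}$, so $\tilde\xbold$ cannot be a discrete equilibrium.

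The main obstacle is choosing the right comparison point. The naive attempt---first derive $F^\nu_i(\tilde\xbold)>0$ by the same convexity inequality and then argue that decreasing $\tilde x^\nu_i$ by one strictly lowers $\theta_\nu$---fails in general, because strict convexity in a single coordinate only makes $h'$ strictly increasing, not positive on the whole interval, so the restricted one-variable function may well dip below $h(\tilde x^\nu_i)$ to the left and rebound, leaving no guaranteed unit-decrement improvement. The correct move is to use the sign hypotheses to propagate non-negativity of $F^\nu_i$ all the way down to the lower bound $l^\nu_i$; once $h'(l^\nu_i)\ge 0$ is in hand, strict monotonicity of $h'$ gives monotonicity of $h$ on the whole feasible interval and the integer deviation from $\tilde x^\nu_i$ down to $l^\nu_i$ produces the required strict improvement.
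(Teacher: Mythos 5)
Your proof is correct and follows essentially the same route as the paper: your auxiliary point $\hat\xbold$ is exactly the paper's $\bar{\tilde{\xbold}}$, the convexity/sign argument giving $F^\nu_i(\hat\xbold)\geq F^\nu_i(\bar\xbold)\geq 0$ is the paper's inequality \eqref{eq: augmenting gradient} combined with its step (A), and the only (immaterial) difference is that you conclude via strict monotonicity of the one-dimensional derivative $h'$ where the paper invokes the strict-convexity gradient inequality directly.
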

\begin{proof}
 Let $\bar{\tilde{\xbold}} \in X \cap \Z^n$ be such that $\bar{\tilde{x}}^\nu_i = \bar x^\nu_i$ and $\bar{\tilde{x}}^\mu_j = \tilde x^\mu_j$ for all $(\mu,j) \neq (\nu,i)$.
 And let $\tilde{\bar{\xbold}} \in X \cap \Z^n$ be such that $\tilde{\bar{x}}^\nu_i = \tilde x^\nu_i$ and $\tilde{\bar{x}}^\mu_j = \bar x^\mu_j$ for all $(\mu,j) \neq (\nu,i)$.

 Let us assume that situation (i) holds:
 function $F^\nu_i (\bullet)$ is convex, then we can write
 $$
  F^\nu_i (\bar{\tilde{\xbold}}) - F^\nu_i (\bar \xbold) \geq J F^\nu_i (\bar \xbold) (\bar{\tilde{\xbold}} - \bar \xbold).
 $$
 And then, by assumptions made in (i), it holds that
 \begin{equation}\label{eq: augmenting gradient}
  F^\nu_i (\bar \xbold) \leq F^\nu_i (\bar{\tilde{\xbold}}).
 \end{equation}
 The following chain of inequalities holds
 \begin{align}\label{eq: chain first theorem}
  0 \overset{(A)}{\leq} F^\nu (\bar \xbold)\trt (\tilde{\bar{\xbold}} - \bar \xbold) = F^\nu_i (\bar \xbold) (\tilde{x}^\nu_i - \bar x^\nu_i) \overset{(B)}{\leq} F^\nu_i (\bar{\tilde{\xbold}}) (\tilde{x}^\nu_i - \bar x^\nu_i) \overset{(C)}{<} \theta_\nu(\tilde \xbold) - \theta_\nu(\bar{\tilde{\xbold}}),
 \end{align}
 where (A) holds since $\bar \xbold$ is a solution for the continuous NEP and $\tilde{\bar{\xbold}}$ is a feasible point; (B) follows from \eqref{eq: augmenting gradient} and the fact that $\tilde x^\nu_i > \bar x^\nu_i$ since $\bar x^\nu_i = l^\nu_i$; (C) holds since $\theta_\nu$ is strictly convex with respect to $x^\nu_i$.
 Therefore $\theta_\nu(\tilde \xbold) > \theta_\nu(\bar{\tilde{\xbold}}) \in X \cap \Z^n$.

 Now let us assume that situation (ii) holds:
 function $F^\nu_i (\bullet)$ is concave, then we can write
 $$
  F^\nu_i (\bar{\tilde{\xbold}}) - F^\nu_i (\bar \xbold) \leq J F^\nu_i (\bar \xbold) (\bar{\tilde{\xbold}} - \bar \xbold).
 $$
 And then, by assumptions made in (ii), it holds that
 \begin{equation}\label{eq: augmenting gradient2}
  F^\nu_i (\bar \xbold) \geq F^\nu_i (\bar{\tilde{\xbold}}).
 \end{equation}
 Then, by using \eqref{eq: augmenting gradient2} and with the same rationale of case (i), chain of inequalities \eqref{eq: chain first theorem} holds. Therefore we have the proof.
\end{proof}
As said above procedure ${\cal F}$ is based on Proposition \ref{pro: cut}. Therefore it can be applied only when Algorithm \ref{alg: branching} faces situations satisfying assumptions of Proposition \ref{pro: cut}.
Let us assume, for simplicity, that
\begin{equation}\label{eq: quadratic functions and box constraints}
 \theta_\nu(\xbold) := \frac{1}{2} ({x^\nu})\trt Q^\nu x^\nu + \left(C^{\nu} \xbold^{-\nu} + b^\nu\right)\trt x^\nu, \quad \forall\, \nu \in \{1, \ldots, N\},
\end{equation}
with $Q^\nu \in {\mathbb M}_{n_\nu \times n_\nu}$, $C^\nu \in {\mathbb M}_{n_\nu \times (n - n_{\nu})}$ and $b^\nu \in \Re^{n_\nu}$ for all $\nu$. Moreover, let us assume that $X$ is defined as in \eqref{eq: box}. Then it is clear that, considering Algorithm \ref{alg: branching}, any strategy set $Y \in \cal L$ is a box.
In this case, and by exploiting Proposition \ref{pro: cut}, procedure ${\cal F}$ can be defined as follows:
\begin{algorithm}\label{alg: procedure F} (Procedure ${\cal F}$)
 \begin{description}
  \item[(Data)] A box strategy set $Y := \{\xbold \in \Re^n: \lbold \leq \xbold \leq \ubold\}$, with $\lbold \in \Z^n$ and $\ubold \in \Z^n$, and a solution $\bar \xbold$ of the continuous NEP defined by \eqref{eq: continuous prob Y}.
  \item[(S.0)] Set $B := \Re^n$, $\nu := 1$ and $i := 1$.
  \item[(S.1)] If all the following conditions hold:
  \begin{itemize}
   \item $Q^\nu_{ii} > 0$;
   \item $\bar x^\nu_i = l^\nu_i$;
   \item $\bar x^\nu_j = l^\nu_j$ for all $Q^\nu_{ij} > 0$, $j = 1, \ldots, n_\nu$, $j \neq i$;
   \item $\bar x^\nu_j = u^\nu_j$ for all $Q^\nu_{ij} < 0$, $j = 1, \ldots, n_\nu$, $j \neq i$;
   \item $\bar \xbold^{-\nu}_j = \lbold^{-\nu}_j$ for all $C^\nu_{ij} > 0$, $j = 1, \ldots, (n-n_\nu)$;
   \item $\bar \xbold^{-\nu}_j = \ubold^{-\nu}_j$ for all $C^\nu_{ij} < 0$, $j = 1, \ldots, (n-n_\nu)$;
  \end{itemize}
  then set $B := B \cap \{\xbold \in \Re^n: x^\nu_i \leq l^\nu_i \}$ and go to \textbf{\emph{(S.3)}}.
  \item[(S.2)] If all the following conditions hold:
  \begin{itemize}
   \item $Q^\nu_{ii} > 0$;
   \item $\bar x^\nu_i = u^\nu_i$;
   \item $\bar x^\nu_j = u^\nu_j$ for all $Q^\nu_{ij} > 0$, $j = 1, \ldots, n_\nu$, $j \neq i$;
   \item $\bar x^\nu_j = l^\nu_j$ for all $Q^\nu_{ij} < 0$, $j = 1, \ldots, n_\nu$, $j \neq i$;
   \item $\bar \xbold^{-\nu}_j = \ubold^{-\nu}_j$ for all $C^\nu_{ij} > 0$, $j = 1, \ldots, (n-n_\nu)$;
   \item $\bar \xbold^{-\nu}_j = \lbold^{-\nu}_j$ for all $C^\nu_{ij} < 0$, $j = 1, \ldots, (n-n_\nu)$;
  \end{itemize}
  then set $B := B \cap \{\xbold \in \Re^n: x^\nu_i \geq u^\nu_i \}$.
  \item[(S.3)] Set $i := i + 1$; if $i \leq n_\nu$ then go to \textbf{\emph{(S.1)}}, else set $i := 1$.
  \item[(S.4)] Set $\nu := \nu + 1$; if $\nu \leq N$ then go to \textbf{\emph{(S.1)}}.
  \item[(Output)] Return $B$.
  \end{description}
\end{algorithm}

\subsection{Procedure $\cal C$}
This procedure is used in order to produce $p$ boxes $\bar B_i$ such that, given a point $\bar \xbold \in \Z^n$ and a strategy set $Y$, we obtain $\bar \xbold \notin \cup_{i = 1}^p \bar B_i$, and $\bar B_i \cap \bar B_j = \emptyset$ for all $i \neq j$, and $\tilde \xbold \in \cup_{i = 1}^p \bar B_i$ for all $\tilde \xbold \in Y \cap \Z^n$ with $\tilde \xbold \neq \bar \xbold$.

There are a lot of different implementations for this procedure. In this work we propose the following which yields $p=2n$ boxes.
\begin{algorithm}\label{alg: procedure C} (Procedure ${\cal C}$)
 \begin{description}
  \item[(Data)] A point $\bar \xbold \in \Z^n$.
  \item[(S.0)] Set $\bar B_i := \Re^n$, for all $i \in \{1, \ldots, 2n\}$, and set $j := 1$.
  \item[(S.1)] Set:
  \begin{itemize}
   \item $\bar B_{2j-1} := \bar B_{2j-1} \cap \{\xbold \in \Re^n: \, \xbold_j \geq \bar \xbold_j + 1 \}$;
   \item $\bar B_{2j} := \bar B_{2j} \cap \{\xbold \in \Re^n: \, \xbold_j \leq \bar \xbold_j - 1 \}$;
   \item $\bar B_{t} := \bar B_{t} \cap \{\xbold \in \Re^n: \, \xbold_j = \bar \xbold_j \}$ for all $t \in \Z$ such that $2j+1 \leq t \leq 2n$.
  \end{itemize}
  \item[(S.2)] Set $j := j + 1$; if $j \leq n$ then go to \textbf{\emph{(S.1)}}.
  \item[(Output)] Return $\bar B_i$, $i = 1, \ldots, 2n$.
  \end{description}
\end{algorithm}
At this point, we have described all tools used by Algorithm \ref{alg: branching}, which, as said above, can be used to compute the whole solution set of any discrete NEP. Its efficiency is totally based on procedure $\cal F$ that prunes the branches of the search tree. In the next subsection, we propose an improved version of the branching algorithm, that, by using a preliminary procedure, shrinks the search area and drastically reduces the feasible points to be examined. However this improved algorithm can be used only if the problem has a particular structure.

\subsection{An improved version of Algorithm \ref{alg: branching}}\label{subsec: algorithmic framework}

Here we propose an efficient algorithmic framework for finding all equilibria of discrete NEPs in which functions $\theta_\nu$ are defined as in \eqref{eq: quadratic functions and box constraints} and $X$ is defined by box constraints as in \eqref{eq: box}.
Note that many noncooperative games can be modeled as NEPs defined by \eqref{eq: quadratic functions and box constraints} and \eqref{eq: box}, see e.g. \cite{basar1995dynamic}.
This framework is composed of three parts: (i) compute lower bounds $\lbold^* \geq \lbold$ for the set of solutions, (ii) compute upper bounds $\ubold^* \leq \ubold$ for the set of solutions, and, then, (iii) use Algorithm \ref{alg: branching} in order to get all solutions by exploring points in $[\lbold^*,\ubold^*] \cap \Z^n$.
The framework presented in this subsection is a great deal faster than just Algorithm \ref{alg: branching} to compute either one equilibrium or the whole solution set of the discrete NEP. This is due to the effectiveness of the shrinkage of the search area: from 
$[\lbold,\ubold]$ to $[\lbold^*,\ubold^*]$.

The following Gauss-Seidel method performs the first task, that is computing lower bounds $\lbold^*$ for the set of solutions of the discrete NEP.
\begin{algorithm}\label{alg: lower bound} (Gauss-Seidel Method to compute Solution Set Lower Bounds)
 \begin{description}
  \item[(S.0)] Set $\wbold := \lbold$ and $\ybold := \lbold$.
  \item[(S.1)] Set $\nu:=1$.
  \item[(S.2)] Set $i:=1$.
  \item[(S.3)] Set $\zbold \in \Z^n$ such that, for all $\mu \in \{1,\ldots,N\}$ and all $j \in \{1,\ldots,n_\mu\}$,
   $z^\mu_j := y^\mu_j$ if $\frac{\partial F^\nu_i}{\partial x^\mu_j} \leq 0$
   and $z^\mu_j := u^\mu_j$ otherwise.
  \item[(S.4)] Compute $s \in \Z$ such that $y^\nu_i \leq s \leq u^\nu_i$,
  
  if $s+1 \leq u^\nu_i$ then
  \begin{equation}\label{eq: solsetlowbnd cond1}
  \theta_\nu \left( \left(
  \begin{array}{c} z^\nu_1 \\ \vdots \\ z^\nu_{i-1} \\[1em] s \\[1em] z^\nu_{i+1} \\ \vdots \\ z^\nu_{n_\nu} \end{array}\right),
  \zbold^{-\nu} \right)
  \leq
  \theta_\nu \left( \left(
  \begin{array}{c} z^\nu_1 \\ \vdots \\ z^\nu_{i-1} \\[1em] s+1 \\[1em] z^\nu_{i+1} \\ \vdots \\ z^\nu_{n_\nu} \end{array}\right),
  \zbold^{-\nu} \right),
  \end{equation}
  and if $y^\nu_i \leq s-1$ then
  \begin{equation}\label{eq: solsetlowbnd cond2}
  \theta_\nu \left( \left(
  \begin{array}{c} z^\nu_1 \\ \vdots \\ z^\nu_{i-1} \\[1em] s-1 \\[1em] z^\nu_{i+1} \\ \vdots \\ z^\nu_{n_\nu} \end{array}\right),
  \zbold^{-\nu} \right)
  >
  \theta_\nu \left( \left(
  \begin{array}{c} z^\nu_1 \\ \vdots \\ z^\nu_{i-1} \\[1em] s \\[1em] z^\nu_{i+1} \\ \vdots \\ z^\nu_{n_\nu} \end{array}\right),
  \zbold^{-\nu} \right).
  \end{equation}
  \item[(S.5)] Set $y^\nu_i := s$.
  \item[(S.6)] Set $i:=i+1$. If $i > n_\nu$ then set $\nu:=\nu+1$. 
  If $\nu \leq N$ then go to \textbf{\emph{(S.2)}}.
  \item[(S.7)] If $\wbold = \ybold$ then STOP and return $\lbold^* := \ybold$. Else set $\wbold := \ybold$ and go to \textbf{\emph{(S.1)}}.
 \end{description}
\end{algorithm}
\medskip
The following theorem states that point $\lbold^*$ returned by Algorithm \ref{alg: lower bound} is a lower bound for the set of solutions of the discrete NEP.

\begin{theorem}\label{th: lower bound convergence}
 Suppose that $\theta_\nu$ are defined as in \eqref{eq: quadratic functions and box constraints} and that $X$ is non-empty, bounded and defined as in \eqref{eq: box}.

 Then Algorithm \ref{alg: lower bound} returns point $\lbold^*$ in a finite number of iterations and for any solution $\xbold^*$ of the discrete NEP, defined by \eqref{eq: prob}, it holds that $\lbold^* \leq \xbold^*$.
\end{theorem}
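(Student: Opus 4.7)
The plan is to establish two things: finite termination, and the invariant that for every equilibrium $\xbold^*$ of the discrete NEP the running vector $\ybold$ always satisfies $\ybold \leq \xbold^*$ componentwise. The lower-bound conclusion will then be immediate, since at termination $\lbold^* = \ybold \leq \xbold^*$.

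Finite termination is essentially by monotonicity. In step (S.4) the selected $s$ lies in $[y^\nu_i, u^\nu_i]$, so each update $y^\nu_i \leftarrow s$ is non-decreasing, and hence $\ybold$ is componentwise non-decreasing throughout the execution. Since $\ybold \in \Z^n$ is bounded above by $\ubold$, the value of $\ybold$ at the end of successive outer cycles must eventually stabilize, which triggers the stopping test $\wbold = \ybold$ in (S.7).

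For the invariant, which is trivial at initialization, assume $\ybold \leq \xbold^*$ on entry to (S.3)--(S.5) for player $\nu$ and coordinate $i$; the goal is to show that the new value $s$ satisfies $s \leq x^{*\nu}_i$. Let $\tilde\zbold(t)$ denote the point obtained from the vector $\zbold$ built in (S.3) by replacing its $(\nu,i)$-entry by $t$, and set $\phi(t) := \theta_\nu(\tilde\zbold(t))$. Because $\theta_\nu$ is quadratic and convex in $x^\nu$, $\phi$ is a convex quadratic in $t$ with $\phi(t+1) - \phi(t) = F^\nu_i(\tilde\zbold(t)) + \tfrac{1}{2} Q^\nu_{ii}$, and conditions \eqref{eq: solsetlowbnd cond1}--\eqref{eq: solsetlowbnd cond2} force $s$ to be the smallest integer minimizer of $\phi$ on $[y^\nu_i, u^\nu_i]$. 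The key computation is the inequality $F^\nu_i(\tilde\zbold(x^{*\nu}_i)) \geq F^\nu_i(\xbold^*)$: since $\theta_\nu$ is quadratic, $F^\nu_i$ is affine, so
\begin{equation*}
F^\nu_i(\tilde\zbold(x^{*\nu}_i)) - F^\nu_i(\xbold^*) = \sum_{(\mu,j) \neq (\nu,i)} \frac{\partial F^\nu_i}{\partial x^\mu_j} \left( z^\mu_j - x^{*\mu}_j \right),
\end{equation*}
and the selection rule of (S.3), together with $y^\mu_j \leq x^{*\mu}_j \leq u^\mu_j$, makes every summand non-negative. If $x^{*\nu}_i = u^\nu_i$ then $s \leq u^\nu_i = x^{*\nu}_i$ trivially; otherwise the unilateral deviation $x^{*\nu} \to x^{*\nu} + e_i$ is feasible and integer, and the equilibrium property yields $F^\nu_i(\xbold^*) + \tfrac{1}{2} Q^\nu_{ii} \geq 0$, so $\phi(x^{*\nu}_i+1) \geq \phi(x^{*\nu}_i)$, and convexity of $\phi$ forces $\phi$ to be non-decreasing on $[x^{*\nu}_i, u^\nu_i]$; hence $s \leq x^{*\nu}_i$, and the invariant propagates.

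The main technical point is recognizing that the sign-based construction of $\zbold$ in (S.3) is engineered precisely so that the affine difference $F^\nu_i(\tilde\zbold(x^{*\nu}_i)) - F^\nu_i(\xbold^*)$ decomposes into a sum of non-negative terms under the inductive bound on $\ybold$; once this is in place, the convexity of $\phi$ and the one-step integer-deviation inequality at an equilibrium close the argument routinely.
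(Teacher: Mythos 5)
Your proof is correct and rests on the same mechanism as the paper's: the sign-based construction of $\zbold$ in (S.3) makes the cross-term comparison one-sided under the running invariant $\ybold \leq \xbold^*$, and convexity of $\theta_\nu$ in the single coordinate $x^\nu_i$ then pins $s$ at or below $x^{*\nu}_i$, with finite termination following from monotone boundedness of the integer iterates exactly as in the paper. The only difference is presentational: the paper argues the contrapositive, showing that any feasible point with $\bar x^\nu_i < s$ admits a strictly improving unilateral move of coordinate $(\nu,i)$ up to $s$ by transferring condition \eqref{eq: solsetlowbnd cond2} from $\zbold$ to $\bar\xbold$, whereas you transfer the equilibrium's one-step discrete optimality condition from $\xbold^*$ to $\zbold$ to conclude that your surrogate $\phi$ is non-decreasing beyond $x^{*\nu}_i$ — the same inequality read in the opposite direction.
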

\begin{proof}
 At {\bf (S.5)} of the first iteration of the algorithm,
 let us consider any point $\bar \xbold \in X \cap \Z^n$ such that $\bar x^1_1 < y^1_1$ and the corresponding point $\tilde \xbold \in X \cap \Z^n$ such that $\tilde x^\nu_i = \bar x^\nu_i$ for all $(\nu,i) \neq (1,1)$ and $\tilde x^1_1 = y^1_1$.

 By assumptions, it holds that
 \begin{equation}\label{eq: augmenting gradient lb}
  \displaystyle \sum_{j=2}^{n_1} Q^1_{1j} \bar x^1_j \leq \sum_{j=2}^{n_1} Q^1_{1j} z^1_j, \qquad \text{ and, } \qquad
  C^1_{1*} \bar \xbold^{-1} \leq C^1_{1*} \zbold^{-1}.
 \end{equation}
 The following chain of inequalities holds
 \begin{align*}
  \frac{1}{2} Q^1_{1\,1} \left(\bar x^1_1\right)^2 - \frac{1}{2} Q^1_{1\, 1} \left(y^1_1\right)^2 \overset{(A)}{>} & \\
  \left( \displaystyle \sum_{j=2}^{n_1} Q^1_{1j} z^1_j + C^1_{1*} \zbold^{-1} + b^1 \right) (y^1_1 - \bar x^1_1) \overset{(B)}{\geq} & \\
  \left( \displaystyle \sum_{j=2}^{n_1} Q^1_{1j} \bar x^1_j + C^1_{1*} \bar \xbold^{-1} + b^1 \right) (y^1_1 - \bar x^1_1),
 \end{align*}
 where (A) holds by \eqref{eq: solsetlowbnd cond2} and the convexity of $\theta_1$ with respect to $x^1_1$; (B) follows from \eqref{eq: augmenting gradient lb} and $\bar x^1_1 < y^1_1$.
 Therefore $\theta_1 (\bar \xbold) > \theta_1 (\tilde \xbold)$, and then $\bar \xbold$ cannot be an equilibrium of the discrete NEP.
 
 By iterating this reasoning, by using the new lower bounds as soon as they are computed, and by noting that $\ybold \geq \wbold$ for all iterations, we get the proof.
\end{proof}
The algorithm to compute upper bounds $\ubold^*$ for the set of solutions of the discrete NEP is specular. We report it for completeness.
\begin{algorithm}\label{alg: upper bound} (Gauss-Seidel Method to compute Solution Set Upper Bounds)
 \begin{description}
  \item[(S.0)] Set $\wbold := \ubold$ and $\ybold := \ubold$.
  \item[(S.1)] Set $\nu:=1$.
  \item[(S.2)] Set $i:=1$.
  \item[(S.3)] Set $\zbold \in \Z^n$ such that, for all $\mu \in \{1,\ldots,N\}$ and all $j \in \{1,\ldots,n_\mu\}$,
   $z^\mu_j := y^\mu_j$ if $\frac{\partial F^\nu_i}{\partial x^\mu_j} \leq 0$
   and $z^\mu_j := l^\mu_j$ otherwise.
  \item[(S.4)] Compute $s \in \Z$ such that $l^\nu_i \leq s \leq y^\nu_i$,
  
  if $l^\nu_i \leq s-1$ then
  \begin{equation}\label{eq: solsetupbnd cond1}
  \theta_\nu \left( \left(
  \begin{array}{c} z^\nu_1 \\ \vdots \\ z^\nu_{i-1} \\[1em] s-1 \\[1em] z^\nu_{i+1} \\ \vdots \\ z^\nu_{n_\nu} \end{array}\right),
  \zbold^{-\nu} \right)
  \geq
  \theta_\nu \left( \left(
  \begin{array}{c} z^\nu_1 \\ \vdots \\ z^\nu_{i-1} \\[1em] s \\[1em] z^\nu_{i+1} \\ \vdots \\ z^\nu_{n_\nu} \end{array}\right),
  \zbold^{-\nu} \right),
  \end{equation}
  and if $s+1 \leq y^\nu_i$ then
  \begin{equation}\label{eq: solsetupbnd cond2}
  \theta_\nu \left( \left(
  \begin{array}{c} z^\nu_1 \\ \vdots \\ z^\nu_{i-1} \\[1em] s \\[1em] z^\nu_{i+1} \\ \vdots \\ z^\nu_{n_\nu} \end{array}\right),
  \zbold^{-\nu} \right)
  <
  \theta_\nu \left( \left(
  \begin{array}{c} z^\nu_1 \\ \vdots \\ z^\nu_{i-1} \\[1em] s+1 \\[1em] z^\nu_{i+1} \\ \vdots \\ z^\nu_{n_\nu} \end{array}\right),
  \zbold^{-\nu} \right).
  \end{equation}
  \item[(S.5)] Set $y^\nu_i := s$.
  \item[(S.6)] Set $i:=i+1$. If $i > n_\nu$ then set $\nu:=\nu+1$. 
  If $\nu \leq N$ then go to \textbf{\emph{(S.2)}}.
  \item[(S.7)] If $\wbold = \ybold$ then STOP and return $\ubold^* := \ybold$. Else set $\wbold := \ybold$ and go to \textbf{\emph{(S.1)}}.
 \end{description}
\end{algorithm}
\medskip
Algorithm \ref{alg: upper bound} works under the same assumptions of Algorithm \ref{alg: lower bound} and then we skip a formal convergence result.

We are now ready to define the improved algorithm to compute the whole solution set of the discrete NEP.
\begin{algorithm}\label{alg: class} (Improved Branching Method)
 \begin{description}
  \item[(S.1)] Compute $\lbold^*$ by using Algorithm \ref{alg: lower bound}.
  \item[(S.2)] Compute $\ubold^*$ by using Algorithm \ref{alg: upper bound}.
  \item[(S.3)] Compute the whole solution set of the discrete NEP by using Algorithm \ref{alg: branching} and initializing ${\cal L} := \{[\lbold^*,\ubold^*]\}$.
 \end{description}
\end{algorithm}
In Section \ref{sec: numerical} we show that Algorithm \ref{alg: class} works very well in practice and effectively computes the whole solution set of discrete NEPs.

\section{Fast algorithms and existence results for a class of discrete NEPs}\label{sec:class}

In this section we define a class of discrete NEPs for which we can give stronger results, namely, existence of equilibria and a fast Jacobi-type method for computing one of their equilibria.

\begin{definition}\label{def: 2groups}
We say that a discrete NEP is \emph{2-groups partitionable} if it satisfies the following conditions:
\begin{enumerate}[(i)]
\item $X$ is defined by box constraints as in \eqref{eq: box};
\item $\theta_\nu$ is defined, for each player $\nu$, as
\begin{equation}\label{eq: objective quadratic generalized}
 \theta_\nu (x^\nu, \xbold^{-\nu}) := \theta^P_\nu (x^\nu) + \Theta^O_\nu (\xbold^{-\nu})\trt x^\nu,
\end{equation}
where
$\theta_\nu^P: \Re^{n_\nu} \to \Re$ is defined in the following way
\begin{equation}\label{eq: private quadratic generalized}
 \theta_\nu^P (x^\nu) := \sum_{i = 1}^{n_\nu} \vartheta_{\nu,i}^P (x^\nu_i) + \frac{1}{2} ({x^\nu})\trt Q^\nu x^\nu,
\end{equation}
$\vartheta_{\nu,i}^P : \Re \to \Re$, for all $i$, and $Q^\nu \in {\mathbb M}_{n_\nu \times n_\nu}$,
and $\Theta^O_\nu : \Re^{n_{-\nu}} \to \Re^{n_\nu}$ is an operator made up of convex or concave functions;
\item a partition of the variables indices in two groups, $G_1$ and $G_2$, exists such that
\begin{align}
& \frac{\partial F^\nu_i(\xbold)}{\partial x^\mu_j} \leq 0, \; \forall \, \xbold \in X, \; \forall \, (\nu,i) \neq (\mu,j) \, : \nonumber\\
& \qquad\qquad\qquad\qquad\qquad (\nu,i) \in G_1 \ni (\mu,j) \text{ or } (\nu,i) \in G_2 \ni (\mu,j), \label{eq: new Z condition 1}\\
& \frac{\partial F^\nu_i(\xbold)}{\partial x^\mu_j} \geq 0, \; \forall \, \xbold \in X, \; \forall \, (\nu,i) \neq (\mu,j) \, : \nonumber\\
& \qquad\qquad\qquad\qquad\qquad (\nu,i) \in G_1 \not\ni (\mu,j) \text{ or } (\nu,i) \in G_2 \not\ni (\mu,j). \label{eq: new Z condition 2}
\end{align} 
\end{enumerate}
\end{definition}
The following is an example of a \emph{2-groups partitionable} discrete NEP.
\begin{example}\label{ex: class easy}
There are two
players. The first player solves the following problem
\begin{align*}
 \displaystyle\min_{x^1} \, \theta_1(x^1,x^2) & = \frac{1}{2}(x^1)\trt \left( \begin{array}{cc} 3 & 1\\1 & 3\end{array}\right)x^1 + (x^1)\trt \left( \begin{array}{cc} 4 & -3\\-1 & 1\end{array}\right)x^2                                                                                            + (7 \quad 2) x^1 \\
& \qquad \qquad -5 \le x^1 \le 5 \\
& \qquad \qquad \qquad x^1 \in \Z^2,
\end{align*}
while the second player solves the following problem
\begin{align*}
 \displaystyle\min_{x^2} \, \theta_2(x^1,x^2) & = \frac{1}{2}(x^2)\trt \left( \begin{array}{cc} 2 & 1\\1 & 2\end{array}\right)x^2 + (x^2)\trt \left( \begin{array}{cc} 1 & -2\\-3 & 4\end{array}\right)x^1                                                                                            + (5 \quad 6) x^2 \\
& \qquad \qquad -5 \le x^2 \le 5 \\
& \qquad \qquad \qquad x^2 \in \Z^2.
\end{align*}
By partitioning variables in this way: $G_1 = \{(1,1), (2,2)\}$, $G_2 = \{(1,2), (2,1)\}$, then \eqref{eq: new Z condition 1} and \eqref{eq: new Z condition 2} are satisfied.
\end{example}
Note that also the problems in Examples \ref{ex: one} and \ref{ex: trap} are \emph{2-groups partitionable} discrete NEPs.

In order to give an economic interpretation of condition (iii) in Definition \ref{def: 2groups}, let us consider a standard pricing game.
There are $N$ firms that compete in the same market in order to increase their profits as much as possible. Each firm $\nu$ produces a single product and sets its price $p^\nu \in \Z$. The prices are considered as integer, rather than real, variables in order to get the model more realistic, since the price of a product cannot be specified more closely than the minimum unit of a currency. For the sake of simplicity, let us assume that the consumers demand function for each firm $\nu$ is linear:
$$
D_\nu (p^\nu,\pbold^{-\nu}) := a_\nu - b_\nu p^\nu + (c^\nu)\trt \pbold^{-\nu}, 
$$
where $a_\nu, \, b_\nu \in \Re_+$ and $c^\nu \in \Re^{N-1}$. Moreover, assume that there are no fixed costs of production and marginal cost $d_\nu$ for each firm $\nu$ is such that $0 < d_\nu < a_\nu$. Therefore, for each firm $\nu$, since the objective is to maximize its profit $D_\nu (p^\nu,\pbold^{-\nu}) (p^\nu - d_\nu)$, the optimization problem to solve is the following:
\begin{align*}
\displaystyle\min_{p^\nu} & \; b_\nu (p^\nu)^2 - (\pbold^{-\nu})\trt c^\nu p^\nu - (a_\nu + b_\nu d_\nu) p^\nu\\
& 0 \leq p^\nu \leq P_\nu, \quad p^\nu \in \Z, 
\end{align*}
where $P_\nu \in \Z$ is a suitable upper bound.
Clearly this discrete NEP satisfies conditions (i) and (ii) in Definition \ref{def: 2groups}. Condition (iii) in Definition \ref{def: 2groups} simply requires that two groups of products $G_1$ and $G_2$ exist such that: two products of firms $\nu$ and $\mu$ belonging to the same group are substitutes (that is $c^\nu_\mu \geq 0$ and $c^\mu_\nu \geq 0$), while two products of firms $\nu$ and $\mu$ belonging to different groups are complements (that is $c^\nu_\mu \leq 0$ and $c^\mu_\nu \leq 0$).

Let us consider the following Jacobi-type method:
\begin{algorithm}\label{alg: Jacobi} (Jacobi-type Method)
 \begin{description}
  \item[(S.0)] Choose a starting point $\xbold^0 \in X \cap \Z^n$ and set $k:=0$.
  \item[(S.1)] If $\xbold^k$ is a solution of the discrete NEP then STOP.
  \item[(S.2)] Choose a subset ${\cal J}^k$ of the players. For each $\nu \in {\cal J}^k$ compute a best response ${\hat x}^{k,\nu}$:
  $$
   {\hat x}^{k,\nu} \in \hat x^\nu(\xbold^{k,-\nu}).
  $$
  \item[(S.3)] For all $\nu \in {\cal J}^k$ set $x^{k+1,\nu} := \hat x^{k,\nu}$ and for all $\nu \notin {\cal J}^k$ set $x^{k+1,\nu} := x^{k,\nu}$. \\ Set $k:=k+1$ and go to \textbf{\emph{(S.1)}}.
 \end{description}
\end{algorithm}
\medskip

\noindent
We recall that $\hat x^\nu(\bullet)$ is defined in \eqref{eq: best response}.
This type of methods are very popular among practitioners of Nash problems and their rationale is particularly simple to grasp since they are the most ``natural'' decomposition methods. Algorithm \ref{alg: Jacobi} can be easily implemented in a parallel framework in order to reduce the computational burden at \textbf{(S.2)}. Furthermore, it has a non-standard feature that gives an additional degree of freedom compared to traditional Jacobi-type schemes: the choice of subset ${\cal J}^k$ of the players that ``play'' at iteration $k$. We can say that it is an incomplete Jacobi-type iteration. As special cases, by selecting only one player at each iteration we get a Gauss-Southwell scheme, while by selecting roundly each single player we get a Gauss-Seidel scheme.
\begin{theorem}\label{th: Jacobi convergence}
 Suppose that \eqref{eq: new Z condition 1} and \eqref{eq: new Z condition 2} hold and that, for each player $\nu$, $\theta_\nu$ is defined as in \eqref{eq: objective quadratic generalized}, where $\theta_\nu^P$ is defined as in \eqref{eq: private quadratic generalized} and $\Theta^O_\nu$ is an operator made up of convex or concave functions.
 Suppose that $X$ is non-empty, bounded and defined by box constraints as in \eqref{eq: box}.

 For all $\nu \in \{1,\ldots,N\}$ and all $i \in \{1,\ldots,n_\nu\}$,
 let $x^{0,\nu}_i := l^\nu_i$ if $(\nu,i) \in G_1$ and 
 let $x^{0,\nu}_i := u^\nu_i$ if $(\nu,i) \in G_2$.
 Let a finite positive integer $h$ exists such that $\nu \in \cup_{t = k}^{k + h} {\cal J}^t$ for each player $\nu$ and each iterate $k$.
 
 Then, for each iterate $k$ and each player $\nu \in {\cal J}^k$, a best response $\hat x^{k,\nu} \in \hat x^\nu(\xbold^{k,-\nu})$ can be computed such that
 \begin{equation}\label{eq: best increase}
 \hat x^{k,\nu}_i \geq x^{k,\nu}_i, \quad \forall \, (\nu,i) \in G_1, \qquad \hat x^{k,\nu}_i \leq x^{k,\nu}_i, \quad \forall \, (\nu,i) \in G_2.
 \end{equation}
 By computing $\hat x^{k,\nu}$ for all $k$ and all $\nu \in {\cal J}^k$ such that \eqref{eq: best increase} holds, Algorithm \ref{alg: Jacobi} converges in a finite number of iterations to a solution of the discrete NEP defined by \eqref{eq: prob}.
\end{theorem}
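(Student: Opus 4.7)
The plan is a monotone-iteration argument in the spirit of Topkis and supermodular games, adapted to the discrete integer setting. First I would apply the sign-change transformation $\tilde x^\nu_i := x^\nu_i$ for $(\nu,i)\in G_1$ and $\tilde x^\nu_i := -x^\nu_i$ for $(\nu,i)\in G_2$, with the analogous flips of $\lbold$ and $\ubold$. A short case analysis of \eqref{eq: new Z condition 1}--\eqref{eq: new Z condition 2}, combined with the chain rule (both the flip of $x^\mu_j$ and the flip of $F^\nu_i$ contribute a sign), shows that in the transformed coordinates every off-diagonal entry of the Jacobian of $\tilde F$ is non-positive throughout $X$, so $\tilde F$ is a Z-function on the transformed box. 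The initial point $\xbold^0$ prescribed in the statement corresponds precisely to $\tilde \xbold^0$ sitting at the componentwise minimum of the transformed box.

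Second I would prove a discrete monotone-selection lemma: given $\xbold^{-\nu}$ and $\ybold^{-\nu}$ with $\tilde{\xbold}^{-\nu}\le \tilde{\ybold}^{-\nu}$ componentwise, for every $\hat x^\nu \in \hat x^\nu(\xbold^{-\nu})$ there exists $\hat y^\nu\in \hat x^\nu(\ybold^{-\nu})$ with $\tilde{\hat x}^\nu\le \tilde{\hat y}^\nu$. This is the discrete analogue of classical comparative statics for submodular minimisation: the Z-property of the Jacobian of $\tilde F$ yields decreasing differences of $\tilde\theta_\nu(\cdot,\tilde\ybold^{-\nu})$ in $\tilde x^\nu$, while the cross-player off-diagonal signs yield decreasing differences in $\tilde\ybold^{-\nu}$. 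In the integer setting the argument goes through by verifying that $\hat x^\nu(\ybold^{-\nu})$ is a sublattice under the $\tilde\cdot$-order on the bounded integer box, so its greatest element provides the required monotone selection.

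Third I would prove by induction on $k$ that $\tilde\xbold^k\le \tilde\xbold^{k+1}$ componentwise, which is exactly \eqref{eq: best increase}. The base case is immediate because $\tilde\xbold^0$ is the minimum element of the transformed box. For the inductive step, a player $\nu\in{\cal J}^k$ updates from $x^{k,\nu}$, which coincides either with $x^{0,\nu}$ or with the best response $\nu$ computed at the most recent earlier iteration $k'<k$ with $\nu\in{\cal J}^{k'}$; since the induction hypothesis gives $\tilde\xbold^{k,-\nu}\ge \tilde\xbold^{k',-\nu}$, the monotone-selection lemma supplies a best response $\hat x^{k,\nu}$ with $\tilde{\hat x}^{k,\nu}\ge \tilde x^{k,\nu}$. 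Players outside ${\cal J}^k$ carry their strategies unchanged and trivially respect the monotonicity.

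Finally, $\{\tilde\xbold^k\}$ is componentwise non-decreasing, integer-valued and bounded above by the transformed $\tilde\ubold$, so it stabilises at some $\xbold^*$ after finitely many iterations. The hypothesis that every player belongs to $\bigcup_{t=k}^{k+h}{\cal J}^t$ for every $k$ then forces each $\nu$ to perform a best-response computation after stabilisation; since the returned response must agree with the current iterate $x^{*,\nu}$ to preserve monotonicity, we conclude $x^{*,\nu}\in \hat x^\nu(\xbold^{*,-\nu})$ for every $\nu$, i.e.\ $\xbold^*$ is an equilibrium, and Algorithm \ref{alg: Jacobi} halts at \textbf{(S.1)}. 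The main obstacle will be the monotone-selection lemma: the continuous supermodular argument is classical, but in the discrete integer setting one must carefully verify the sublattice structure of the best-response set and the existence of a greatest element under the $\tilde\cdot$-order, and it is precisely the Z-structure produced by the coordinate flip (and not merely a weaker Jacobian-sign condition) that makes this selection possible.
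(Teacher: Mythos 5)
Your proof is correct and, at its core, follows the same strategy as the paper's: start at the corner of the box determined by $G_1$ and $G_2$, show inductively that a best response can always be selected that moves every coordinate monotonically in the group-dependent direction, and conclude by finiteness of the integer box. The one genuine difference is how the key monotone-selection lemma is obtained. You flip the sign of the $G_2$ coordinates, observe that \eqref{eq: new Z condition 1}--\eqref{eq: new Z condition 2} then make all off-diagonal entries of the transformed Jacobian non-positive throughout the box, and invoke classical Topkis machinery: submodularity of the transformed $\theta_\nu^P$, decreasing differences in the rivals' variables, the sublattice structure of the best-response set and its monotonicity in the strong set order. The paper proves the same selection from scratch: its points $\bar y^\nu$ and $\tilde y^\nu$ are precisely the lattice meet and join of $y^\nu$ with the current iterate in your flipped order, and its inequality \eqref{eq: dominance condition} is exactly the submodularity inequality, verified by direct computation from the quadratic-plus-separable form \eqref{eq: private quadratic generalized}. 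Your route is shorter and makes transparent that the \emph{2-groups partitionable} class is a coordinate-flipped supermodular game (which also illuminates Corollary \ref{co: existence} and the comparison with Topkis at the end of Section \ref{sec:class}); the paper's route is self-contained and exhibits the monotone best response constructively as the join, rather than as the greatest element of the best-response set. One small wording issue in your final step: once the monotone, bounded, integer-valued sequence becomes constant, the computed best responses coincide with the current strategies simply because the iterates no longer change, not because agreement is needed \emph{to preserve monotonicity}; this is cosmetic and does not affect correctness.
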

\begin{proof}
 First of all note that, by assumptions on $X$, set $\hat x^\nu(\bar \xbold^{-\nu})$ is non-empty and finite for any $\nu$ and any $\bar \xbold^{-\nu}$. However, it is easy to see that $\hat x^\nu(\bar \xbold^{-\nu})$ may contain more than one element.

 Let us consider the first iteration. Since $\xbold^1 \in X \cap \Z^n$ then it holds that $x^{1,\nu}_i \geq x^{0,\nu}_i$ if $(\nu,i) \in G_1$, and $x^{1,\nu}_i \leq x^{0,\nu}_i$ if, otherwise, $(\nu,i) \in G_2$.
 For all $\nu$ and all $i \in \{1,\ldots,n_\nu\}$ such that $(\nu,i) \in G_1$, if $\Theta_{\nu,i}^O$ is a convex function, then we can write
 \begin{equation}\label{eq: jacobi proof conv}
  \Theta_{\nu,i}^O (\xbold^{0,-\nu}) - \Theta_{\nu,i}^O (\xbold^{1,-\nu}) \geq \nabla \Theta_{\nu,i}^O (\xbold^{1,-\nu})\trt (\xbold^{0,-\nu} - {\xbold^{1,-\nu}}),
 \end{equation}
 otherwise, $\Theta_{\nu,i}^O$ is concave, and then we can write
 \begin{equation}\label{eq: jacobi proof conc}
  \Theta_{\nu,i}^O (\xbold^{1,-\nu}) - \Theta_{\nu,i}^O (\xbold^{0,-\nu}) \leq \nabla \Theta_{\nu,i}^O (\xbold^{0,-\nu})\trt (\xbold^{1,-\nu} - {\xbold^{0,-\nu}}).
 \end{equation}
 In both cases, since, by \eqref{eq: new Z condition 1}, for all $\mu \neq \nu$ such that $(\mu,j) \in G_1$ it holds that 
 $$
 x^{1,\mu}_j \geq x^{0,\mu}_j, \qquad \frac{\partial \Theta_{\nu,i}^O (\xbold^{0,-\nu})}{\partial x^\mu_j} \leq 0, \qquad \frac{\partial \Theta_{\nu,i}^O (\xbold^{1,-\nu})}{\partial x^\mu_j} \leq 0,
 $$
 while, by \eqref{eq: new Z condition 2}, for all $\mu \neq \nu$ such that $(\mu,j) \in G_2$ it holds that
 $$
 x^{1,\mu}_j \leq x^{0,\mu}_j, \qquad \frac{\partial \Theta_{\nu,i}^O (\xbold^{0,-\nu})}{\partial x^\mu_j} \geq 0, \qquad \frac{\partial \Theta_{\nu,i}^O (\xbold^{1,-\nu})}{\partial x^\mu_j} \geq 0,
 $$
 then, by \eqref{eq: jacobi proof conv} or \eqref{eq: jacobi proof conc}, it holds that
 \begin{equation}\label{eq: decreasing gradients}
  \Theta_{\nu,i}^O (\xbold^{1,-\nu}) \leq \Theta_{\nu,i}^O (\xbold^{0,-\nu}), \qquad \forall \, (\nu,i) \in G_1.
 \end{equation}
 On the other hand, for all $\nu$ and all $i \in \{1,\ldots,n_\nu\}$ such that $(\nu,i) \in G_2$, if $\Theta_{\nu,i}^O$ is a convex function, then we can write
 \begin{equation}\label{eq: jacobi proof conv2}
  \Theta_{\nu,i}^O (\xbold^{1,-\nu}) - \Theta_{\nu,i}^O (\xbold^{0,-\nu}) \geq \nabla \Theta_{\nu,i}^O (\xbold^{0,-\nu})\trt (\xbold^{1,-\nu} - {\xbold^{0,-\nu}}),
 \end{equation}
 otherwise, $\Theta_{\nu,i}^O$ is concave, and we can write
 \begin{equation}\label{eq: jacobi proof conc2}
  \Theta_{\nu,i}^O (\xbold^{0,-\nu}) - \Theta_{\nu,i}^O (\xbold^{1,-\nu}) \leq \nabla \Theta_{\nu,i}^O (\xbold^{1,-\nu})\trt (\xbold^{0,-\nu} - {\xbold^{1,-\nu}}).
 \end{equation}
 Then, by \eqref{eq: new Z condition 2}, for all $\mu \neq \nu$ such that $(\mu,j) \in G_1$ it holds that 
 $$
 x^{1,\mu}_j \geq x^{0,\mu}_j, \qquad \frac{\partial \Theta_{\nu,i}^O (\xbold^{0,-\nu})}{\partial x^\mu_j} \geq 0, \qquad \frac{\partial \Theta_{\nu,i}^O (\xbold^{1,-\nu})}{\partial x^\mu_j} \geq 0,
 $$
 while, by \eqref{eq: new Z condition 1}, for all $\mu \neq \nu$ such that $(\mu,j) \in G_2$ it holds that
 $$
 x^{1,\mu}_j \leq x^{0,\mu}_j, \qquad \frac{\partial \Theta_{\nu,i}^O (\xbold^{0,-\nu})}{\partial x^\mu_j} \leq 0, \qquad \frac{\partial \Theta_{\nu,i}^O (\xbold^{1,-\nu})}{\partial x^\mu_j} \leq 0,
 $$
 then, by \eqref{eq: jacobi proof conv2} or \eqref{eq: jacobi proof conc2}, it holds that
 \begin{equation}\label{eq: increasing gradients}
  \Theta_{\nu,i}^O (\xbold^{1,-\nu}) \geq \Theta_{\nu,i}^O (\xbold^{0,-\nu}), \qquad \forall \, (\nu,i) \in G_2.
 \end{equation}
 Now let us consider the second iteration.
 In order to prove that, for all $\nu$, a best response $\hat x^{1,\nu} \in \hat x^\nu(\xbold^{1,-\nu})$ exists such that for all $i \in \{1,\ldots,n_\nu\}$:
 \begin{equation}\label{eq: existence increase}
  \hat x^{1,\nu}_i \geq x^{1,\nu}_i, \quad \text{ if } \, (\nu,i) \in G_1, \qquad \text{ and, } \qquad \hat x^{1,\nu}_i \leq x^{1,\nu}_i, \quad \text{ if } \, (\nu,i) \in G_2,
 \end{equation}
 we have to consider two possibilities. If $\nu \notin {\cal J}^0$ then $x^{1,\nu} = x^{0,\nu}$ and then \eqref{eq: existence increase} is trivially satisfied. Otherwise $\nu \in {\cal J}^0$ and we suppose by contradiction that for all $y^\nu \in \hat x^\nu(\xbold^{1,-\nu})$ a non-empty set of indices $J \subseteq \{1, \ldots, n_\nu\}$ exists such that for all $i \in J$ it holds that $y^\nu_i < x^{1,\nu}_i$ if $(\nu,i) \in G_1$ and $y^\nu_i > x^{1,\nu}_i$ if $(\nu,i) \in G_2$. Now we show that this is impossible.
 Let $\bar J := \{1, \ldots, n_\nu\} \setminus J$, for all $j \in \bar J$ we have $y^\nu_j \geq x^{1,\nu}_j$ if $(\nu,j) \in G_1$ and $y^\nu_j \leq x^{1,\nu}_j$ if $(\nu,j) \in G_2$.
 We define $\bar y^\nu, \tilde y^\nu \in \Z^{n_\nu}$ such that $\bar y^\nu_J = y^\nu_J$, $\bar y^\nu_{\bar J} = x^{1,\nu}_{\bar J}$, $\tilde y^\nu_J = x^{1,\nu}_J$ and $\tilde y^\nu_{\bar J} = y^\nu_{\bar J}$.
 It is easy to see that both $\bar y^\nu$ and $\tilde y^\nu$ are feasible for player $\nu$, since $X$ is a box.
 In order to show that
 \begin{equation}\label{eq: dominance condition}
  \left( \theta_\nu^P (\tilde y^\nu) - \theta_\nu^P (y^\nu) \right) - \left( \theta_\nu^P (x^{1,\nu}) - \theta_\nu^P (\bar y^\nu) \right) \leq 0,
 \end{equation}
 we consider the following two chains of inequalities:
 \begin{align*}
        &\left[ \frac{1}{2} ({\tilde y^\nu})\trt Q^\nu \tilde y^\nu - \frac{1}{2} ({y^\nu})\trt Q^\nu y^\nu \right] - \left[ \frac{1}{2} (x^{1,\nu})\trt Q^\nu (x^{1,\nu}) - \frac{1}{2} ({\bar y^\nu})\trt Q^\nu \bar y^\nu \right] = \\
        &\left[ \left( (y^\nu_J)\trt \; (y^\nu_{\bar J})\trt \right) \, \left( \begin{array}{c} Q^\nu_{JJ} \\ Q^\nu_{\bar JJ} \end{array} \right) \, \left( x^{1,\nu}_J - y^\nu_J \right) + \frac{1}{2} \left( x^{1,\nu}_J - y^\nu_J \right)\trt Q^\nu_{JJ} \left( x^{1,\nu}_J - y^\nu_J \right) \right] - \\
        &\left[ \left( (y^\nu_J)\trt \; (x^{1,\nu}_{\bar J})\trt \right) \, \left( \begin{array}{c} Q^\nu_{JJ} \\ Q^\nu_{\bar JJ} \end{array} \right) \, \left( x^{1,\nu}_J - y^\nu_J \right) + \frac{1}{2} \left( x^{1,\nu}_J - y^\nu_J \right)\trt Q^\nu_{JJ} \left( x^{1,\nu}_J - y^\nu_J \right) \right] = \\
        &\left( y^\nu_{\bar J} - x^{1,\nu}_{\bar J} \right)\trt Q^\nu_{\bar JJ} \left( x^{1,\nu}_J - y^\nu_J \right) \leq 0,
       \end{align*}
  where the last inequality holds because:
  \begin{itemize}
  \item for all $j \in \bar J$: $\left( y^\nu_j - x^{1,\nu}_j \right) \geq 0$ if $(\nu,j) \in G_1$ and $\left( y^\nu_j - x^{1,\nu}_j \right) \leq 0$ if $(\nu,j) \in G_2$,
  \item for all $j \in \bar J$ and all $i \in J$: $Q^\nu_{ji} \leq 0$ if $(\nu,j) \in G_1 \ni (\nu,i)$ or $(\nu,j) \in G_2 \ni (\nu,i)$, by \eqref{eq: new Z condition 1}, and $Q^\nu_{ji} \geq 0$ if $(\nu,j) \in G_1 \not\ni (\nu,i)$ or $(\nu,j) \in G_2 \not\ni (\nu,i)$, by \eqref{eq: new Z condition 2},
  \item for all $i \in J$: $\left( x^{1,\nu}_i - y^\nu_i \right) > 0$ if $(\nu,i) \in G_1$ and $\left( x^{1,\nu}_i - y^\nu_i \right) < 0$ if $(\nu,i) \in G_2$;
  \end{itemize}
  the other chain is the following:
  \begin{align*}
        &\sum_{i=1}^{n_\nu} \vartheta_{\nu,i}^P (\tilde y^\nu_i) - \sum_{i=1}^{n_\nu} \vartheta_{\nu,i}^P (y^\nu_i) - \sum_{i=1}^{n_\nu} \vartheta_{\nu,i}^P (x^{1,\nu}_i) + \sum_{i=1}^{n_\nu} \vartheta_{\nu,i}^P (\bar y^\nu_i) = \\
        &\left(\sum_{i \in J} \vartheta_{\nu,i}^P (x^{1,\nu}_i) + \sum_{i \in \bar J} \vartheta_{\nu,i}^P (y^\nu_i)\right) - \sum_{i=1}^{n_\nu} \vartheta_{\nu,i}^P (y^\nu_i) - \sum_{i=1}^{n_\nu} \vartheta_{\nu,i}^P (x^{1,\nu}_i) + \\
        &\quad \left(\sum_{i \in J} \vartheta_{\nu,i}^P (y^\nu_i) + \sum_{i \in \bar J} \vartheta_{\nu,i}^P (x^{1,\nu}_i)\right) = 0;
       \end{align*}
 therefore \eqref{eq: dominance condition} holds. By using \eqref{eq: dominance condition}, we can write the following chain of inequalities
 \begin{align*}
  0 &\geq \theta_\nu^P (\tilde y^\nu) - \theta_\nu^P (y^\nu) - \theta_\nu^P (x^{1,\nu}) + \theta_\nu^P (\bar y^\nu) \\
    &\overset{(A)}{\geq} \theta_\nu^P (\tilde y^\nu) - \theta_\nu^P (y^\nu) + \Theta_{\nu,J}^O (\xbold^{0,-\nu})\trt (x^{1,\nu}_J - y^\nu_J) \\
    &\overset{(B)}{\geq} \theta_\nu^P (\tilde y^\nu) - \theta_\nu^P (y^\nu) + \Theta_{\nu,J}^O (\xbold^{1,-\nu})\trt (x^{1,\nu}_J - y^\nu_J) \\
    &= \theta_\nu^P (\tilde y^\nu) - \theta_\nu^P (y^\nu) + \Theta_\nu^O (\xbold^{1,-\nu})\trt (\tilde y^\nu - y^\nu),
 \end{align*}
 where (A) holds since $x^{1,\nu} \in \hat x^\nu(\xbold^{0,-\nu})$ (remember that we are considering the case in which $\nu \in {\cal J}^0$) and $\bar y^\nu$ is feasible for player $\nu$; while (B) is true since for all $i \in J$: if $(\nu,i) \in G_1$ we have \eqref{eq: decreasing gradients} and $(x^{1,\nu}_i - y^\nu_i) > 0$, and if $(\nu,i) \in G_2$ we have \eqref{eq: increasing gradients} and $(x^{1,\nu}_i - y^\nu_i) < 0$.
 Then we can conclude that $\theta_\nu (\tilde y^\nu, \xbold^{1,-\nu}) \leq \theta_\nu (y^\nu, \xbold^{1,-\nu})$ and, since $\tilde y^\nu_i \geq x^{1,\nu}_i$ for all $(\nu,i) \in G_1$ and $\tilde y^\nu_i \leq x^{1,\nu}_i$ for all $(\nu,i) \in G_2$, this is a contradiction.
 Therefore, for all $\nu \in {\cal J}^1$, we can set $\hat x^{1,\nu} \in \hat x^\nu(\xbold^{1,-\nu})$ satisfying \eqref{eq: existence increase} for all $i \in \{1,\ldots,n_\nu\}$, and then we obtain  \begin{equation*}\label{eq: existence increase x2}
  x^{2,\nu}_i \geq x^{1,\nu}_i, \quad \forall \, (\nu,i) \in G_1, \qquad x^{2,\nu}_i \leq x^{1,\nu}_i, \quad \forall \, (\nu,i) \in G_2.
 \end{equation*}
 At a generic iterate $k \geq 2$, assuming that for all $t<k$
 \begin{equation*}\label{eq: existence increase xt}
  x^{k,\nu}_i \geq x^{t,\nu}_i, \quad \forall \, (\nu,i) \in G_1, \qquad x^{k,\nu}_i \leq x^{t,\nu}_i, \quad \forall \, (\nu,i) \in G_2,
 \end{equation*}
 we can do similar considerations in order to prove that we can get for all $(\nu,i)$:
  \begin{equation}\label{eq: existence increase xk}
  x^{k+1,\nu}_i \geq x^{k,\nu}_i, \quad \text{ if } \, (\nu,i) \in G_1, \qquad \text{ and, } \qquad x^{k+1,\nu}_i \leq x^{k,\nu}_i, \quad \text{ if } \, (\nu,i) \in G_2.
 \end{equation}
 In fact, we have the following two possibilities for any $\nu \in {\cal J}^k$: if $\nu \notin \cup_{t=0}^{k-1} {\cal J}^t$ then $x^{k,\nu} = x^{0,\nu}$ and then \eqref{eq: existence increase xk} is trivially satisfied, otherwise,
 as above we can contradict the fact that any point $y^\nu$ in $\hat x^\nu(\xbold^{k,-\nu})$ has a non-empty set of indices $J$ such that for all $i \in J$ it holds that $y^\nu_i < x^{k,\nu}_i$ if $(\nu,i) \in G_1$ and $y^\nu_i > x^{k,\nu}_i$ if $(\nu,i) \in G_2$. We define $\bar y^\nu$ and $\tilde y^\nu$ as above. Let $p<k$ be the last iterate such that $\nu \in {\cal J}^p$. By the same considerations made above, we can write $\Theta_{\nu,i}^O (\xbold^{k,-\nu}) \leq \Theta_{\nu,i}^O (\xbold^{p,-\nu})$ for all $(\nu,i) \in G_1$, $\Theta_{\nu,i}^O (\xbold^{k,-\nu}) \geq \Theta_{\nu,i}^O (\xbold^{p,-\nu})$ for all $(\nu,i) \in G_2$, and $\left( \theta_\nu^P (\tilde y^\nu) - \theta_\nu^P (y^\nu) \right) - \left( \theta_\nu^P (x^{k,\nu}) - \theta_\nu^P (\bar y^\nu) \right) \leq 0$ and therefore, since $x^{k,\nu} = x^{p+1,\nu}$, the chain of inequalities
 \begin{align*}
  0 &\geq \theta_\nu^P (\tilde y^\nu) - \theta_\nu^P (y^\nu) - \theta_\nu^P (x^{k,\nu}) + \theta_\nu^P (\bar y^\nu) \\
    &\geq \theta_\nu^P (\tilde y^\nu) - \theta_\nu^P (y^\nu) + \Theta_{\nu,J}^O (\xbold^{p,-\nu})\trt (x^{k,\nu}_J - y^\nu_J) \\
    &\geq \theta_\nu^P (\tilde y^\nu) - \theta_\nu^P (y^\nu) + \Theta_{\nu,J}^O (\xbold^{k,-\nu})\trt (x^{k,\nu}_J - y^\nu_J) \\
    &= \theta_\nu^P (\tilde y^\nu) - \theta_\nu^P (y^\nu) + \Theta_\nu^O (\xbold^{k,-\nu})\trt (\tilde y^\nu - y^\nu),
 \end{align*}
 which proves the contradiction in the same way as above.

 Therefore, we can say that the entire sequence $\{\xbold^k\}$ is such that $\xbold^k \in X \cap \Z^n$ and \eqref{eq: existence increase xk} is true for all $(\nu,i)$. Finally, by recalling that $\nu \in \cup_{t = k}^{k + h} {\cal J}^t$ for each player $\nu$ and each iterate $k$ and that $X$ is convex and compact, and therefore the set $X \cap \Z^n$ has a finite number of elements, we can conclude that the sequence $\{\xbold^k\}$ converges in a finite number of iterations to a point $\xbold^*$ such that
 $$
  x^{*,\nu} \in \hat x^\nu(\xbold^{*,{-\nu}}), \qquad \forall \, \nu \in \{1, \ldots, N\}.
 $$
 And, therefore, $\xbold^*$ is a solution for the discrete NEP.
\end{proof}
The following result is about complexity of Algorithm \ref{alg: Jacobi}.
\begin{proposition}\label{pr: complexity Jacobi}
 Let us suppose that all assumptions in Theorem \ref{th: Jacobi convergence} are fulfilled. Then Algorithm \ref{alg: Jacobi} converges to a solution of the discrete NEP defined by \eqref{eq: prob} in at most $h \left[ \sum_{\nu=1}^N \sum_{i=1}^{n_\nu} \left(u^\nu_i-l^\nu_i\right)\right] + h$ steps.
\end{proposition}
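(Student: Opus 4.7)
The plan is to argue via a monotone integer potential built from the partition $G_1, G_2$ of Theorem \ref{th: Jacobi convergence}, combined with the windowing assumption $\nu \in \bigcup_{t=k}^{k+h} \mathcal J^t$. Concretely, I would introduce
\[
V^k := \sum_{(\nu,i)\in G_1} (u^\nu_i - x^{k,\nu}_i) + \sum_{(\nu,i)\in G_2} (x^{k,\nu}_i - l^\nu_i).
\]
Thanks to the corner initialization prescribed in Theorem \ref{th: Jacobi convergence}, $V^0 = T := \sum_{\nu=1}^{N}\sum_{i=1}^{n_\nu}(u^\nu_i - l^\nu_i)$, while the componentwise monotonicity \eqref{eq: existence increase xk} yields $V^{k+1}\leq V^k$ with strict inequality if and only if $\xbold^{k+1} \neq \xbold^k$. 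Since $V^k$ is a non-negative integer, strict decrease can occur at most $T$ times in total.

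Next I would prove a propagation lemma: if Algorithm \ref{alg: Jacobi} has not terminated at step (S.1) of iteration $k$, then $\xbold^k \neq \xbold^{k+h+1}$. The argument is by contradiction; assume $\xbold^k = \xbold^{k+1} = \cdots = \xbold^{k+h+1}$. The windowing hypothesis supplies, for each player $\nu$, an index $t\in\{k,\dots,k+h\}$ with $\nu\in\mathcal J^t$; step (S.3) then gives $x^{t+1,\nu} = \hat x^{t,\nu}\in \hat x^\nu(\xbold^{t,-\nu})$. Constancy of the iterates on the window forces $x^{k,\nu}=x^{t,\nu}=x^{t+1,\nu}$ and $\xbold^{t,-\nu}=\xbold^{k,-\nu}$, so $x^{k,\nu}\in \hat x^\nu(\xbold^{k,-\nu})$. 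Doing this for every $\nu$ shows that $\xbold^k$ is already a solution of the discrete NEP, contradicting the fact that step (S.1) did not terminate at iteration $k$.

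Combining the two ingredients gives the announced count. The potential can strictly decrease at most $T$ times, and between any two consecutive strict decreases of $V$ (as well as after the last such decrease) the propagation lemma forbids more than $h$ ``stalled'' iterations before either the next change occurs or step (S.1) detects equilibrium. Aggregating these contributions of at most $h$ stalled iterations over the at most $T$ decrease events plus the final equilibrium verification yields the bound of $hT + h$ steps.

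The crux is the propagation lemma: the windowing hypothesis and the best-response update rule must be combined so that constancy of the iterate on $\{k,\dots,k+h+1\}$ transports the optimality condition computed at iteration $t$ back to iteration $k$ for every player simultaneously. Once this is in hand, the bookkeeping in the last step is essentially immediate from the potential argument.
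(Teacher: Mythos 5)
Your argument is correct and is essentially the paper's own proof: the potential $V^k$ is just an explicit bookkeeping of the componentwise monotonicity \eqref{eq: existence increase xk} established in Theorem \ref{th: Jacobi convergence}, and your propagation lemma is exactly the paper's observation that $\xbold^{\bar k}=\xbold^{\bar k+h+1}$ forces $\xbold^{\bar k+h+1}$ to be an equilibrium, after which both you and the paper combine the at most $\sum_{\nu}\sum_{i}(u^\nu_i-l^\nu_i)$ strict-progress events with at most $h$ stalled steps per window. The only (shared) imprecision is the final count, which under a fully pedantic tally comes out as $(h+1)\bigl[\sum_{\nu}\sum_{i}(u^\nu_i-l^\nu_i)\bigr]+h$ rather than $h\bigl[\sum_{\nu}\sum_{i}(u^\nu_i-l^\nu_i)\bigr]+h$; this off-by-one is present in the paper's own accounting as well, so no new gap is introduced.
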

\begin{proof}
 As stated in the proof of Theorem \ref{th: Jacobi convergence}, sequence $\{\xbold^k\}$, generated by Algorithm \ref{alg: Jacobi}, is such that $\xbold^k \in X \cap \Z^n$ and \eqref{eq: existence increase xk} is true for all $(\nu,i)$. By recalling that $\nu \in \cup_{t = k}^{k + h} {\cal J}^t$ for each player $\nu$ and each iterate $k$, then, if for an iteration $\bar k$ it holds that $\xbold^{\bar k} = \xbold^{\bar k+h+1}$, then $\xbold^{\bar k+h+1}$ is a solution of the discrete NEP. Otherwise at least one $(\nu,i)$ exists such that
 \begin{equation*}
  x^{\tilde k+1,\nu}_i > x^{\tilde k,\nu}_i, \quad \text{ if } \, (\nu,i) \in G_1, \qquad \text{ or, } \qquad x^{\tilde k+1,\nu}_i < x^{\tilde k,\nu}_i, \quad \text{ if } \, (\nu,i) \in G_2,
 \end{equation*}
 where $\bar k \leq \tilde k \leq \bar k + h$.
 Therefore Algorithm \ref{alg: Jacobi} can do no more than $h \sum_{\nu=1}^N \sum_{i=1}^{n_\nu} \left(u^\nu_i-l^\nu_i\right)$ steps before
 $x^{k,\nu}_i = u^\nu_i$ if $(\nu,i) \in G_1$ and 
 $x^{k,\nu}_i = l^\nu_i$ if $(\nu,i) \in G_2$, for all $\nu \in \{1,\ldots,N\}$ and all $i \in \{1,\ldots,n_\nu\}$,
 and no more than $h$ final steps to check the optimality.
\end{proof}
Note that, if the iterations of Algorithm \ref{alg: Jacobi} follow a Gauss-Seidel rule, then the upper bound for the algorithm steps is $N [ \sum_{\nu=1}^N \sum_{i=1}^{n_\nu} \left(u^\nu_i-l^\nu_i\right) ] + N$. While, if the iterations follow a Jacobi rule, the bound is $[\sum_{\nu=1}^N \sum_{i=1}^{n_\nu} \left(u^\nu_i-l^\nu_i\right)] + 1$.

The following corollary gives conditions for existence of equilibria of \emph{2-groups partitionable} discrete NEPs.
\begin{corollary}\label{co: existence}
 A \emph{2-groups partitionable} discrete NEP with $X$ non-empty and bounded always has at least one equilibrium.
\end{corollary}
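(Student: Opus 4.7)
The plan is to obtain existence as an immediate byproduct of Theorem \ref{th: Jacobi convergence}, by exhibiting a run of Algorithm \ref{alg: Jacobi} whose limit point is then, by construction, an equilibrium. Concretely, I would verify that each hypothesis of Theorem \ref{th: Jacobi convergence} is implied by the definition of \emph{2-groups partitionable} plus the assumption that $X$ is non-empty and bounded.

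First, conditions (i), (ii), (iii) in Definition \ref{def: 2groups} give, respectively: the box structure of $X$ as required in \eqref{eq: box}; the decomposition $\theta_\nu = \theta_\nu^P + (\Theta_\nu^O)\trt x^\nu$ with $\theta_\nu^P$ as in \eqref{eq: private quadratic generalized} and $\Theta_\nu^O$ made up of convex or concave functions; and the partition $G_1, G_2$ satisfying \eqref{eq: new Z condition 1} and \eqref{eq: new Z condition 2}. Non-emptiness and boundedness of $X$ are assumed. Thus every structural hypothesis of Theorem \ref{th: Jacobi convergence} is in force.

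Next I would choose an admissible initialization and update rule for Algorithm \ref{alg: Jacobi}. Set $x^{0,\nu}_i := l^\nu_i$ whenever $(\nu,i) \in G_1$ and $x^{0,\nu}_i := u^\nu_i$ whenever $(\nu,i) \in G_2$; since $X$ is a non-empty box with integer bounds, $\xbold^0 \in X \cap \Z^n$ is well-defined. For the selection of acting players, one may take for instance the pure Jacobi rule ${\cal J}^k := \{1,\ldots,N\}$ for all $k$, which trivially satisfies $\nu \in \cup_{t=k}^{k+h} {\cal J}^t$ with $h=1$. Boundedness of $X$ together with integrality ensures that each best-response set $\hat x^\nu(\xbold^{k,-\nu})$ in \eqref{eq: best response} is non-empty and finite, so the iteration is well-defined.

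At this point Theorem \ref{th: Jacobi convergence} applies verbatim: the algorithm terminates in a finite number of iterations at a point $\xbold^*$ which is a solution of the discrete NEP defined by \eqref{eq: prob}. Hence at least one equilibrium exists, proving the corollary. The only thing one must be a little careful about is that the best responses chosen along the run can indeed be selected to satisfy \eqref{eq: best increase}; but this is exactly the content of the non-trivial part of the proof of Theorem \ref{th: Jacobi convergence}, so we may invoke it directly and nothing new needs to be argued here. There is essentially no obstacle, as the whole weight of the result has already been carried by Theorem \ref{th: Jacobi convergence}.
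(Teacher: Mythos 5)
Your proof is correct and is exactly the argument the paper intends: the corollary is stated without proof precisely because it follows immediately from Theorem \ref{th: Jacobi convergence}, whose hypotheses are verified just as you describe (the three conditions of Definition \ref{def: 2groups} supply the structural assumptions, non-emptiness and boundedness of the integer box make the initialization and best responses well-defined, and any selection rule covering all players, e.g.\ the pure Jacobi rule, satisfies the coverage condition). Finite convergence of Algorithm \ref{alg: Jacobi} to a solution then yields existence.
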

Let us compare the class of discrete NEPs defined in this section with supermodular games defined in \cite{topkis1998supermodularity}. It is not difficult to see that the problem in Example \ref{ex: class easy} is not a supermodular game. In that, $-\theta_1$ is not supermodular in $x^1$ on $X^1$ for all $x^2$ in $X^2$, e.g.:
\begin{align*}
&-\theta_1 \left( (1,0)\trt, (0,0)\trt \right)
-\theta_1 \left( (0,1)\trt, (0,0)\trt \right)
= -12 > \\
&\qquad -13 = 
-\theta_1 \left( (0,0)\trt, (0,0)\trt \right)
-\theta_1 \left( (1,1)\trt, (0,0)\trt \right).
\end{align*}
Then the discrete NEP in Example \ref{ex: class easy} is a simple case for which existence conditions of Corollary \ref{co: existence} are satisfied, while those in \cite{topkis1998supermodularity} are not.
Moreover, it is not difficult to see that supermodularity conditions given in \cite{topkis1998supermodularity} are satisfied, in the framework we are considering, only if $JF(\xbold)$ is a $Z$-matrix for all $\xbold \in X$. We recall that a square matrix $M$ is a $Z$-matrix if all its off-diagonal entries are non-positive, see \cite{CottPangStoBk}. Clearly, any $JF(\xbold)$ which is a $Z$-matrix for all $\xbold \in X$ satisfies \eqref{eq: new Z condition 1} and \eqref{eq: new Z condition 2}, simply by putting all variables in the same group of the partition, $G_1$ or $G_2$. Therefore, we can say that the class of discrete NEPs defined here is more general than that defined by using supermodularity theory.

Let us consider again the pricing game described above. When there are only two firms ($N=2$), it is known as the Bertrand model of duopoly, see e.g. \cite{tirole1988theory}. In \cite{yang2008solutions} existence for this 2-players discrete NEP was proved only when the two products are substitutes, while, by putting one product in $G_1$ and the other in $G_2$ and referring to Corollary \ref{co: existence}, we can prove existence also when the two products are complements.

\section{Numerical experiments}\label{sec: numerical}

We tested Algorithms \ref{alg: class} and \ref{alg: Jacobi} on a benchmark of discrete NEPs in which $\theta_\nu$ is defined as in \eqref{eq: quadratic functions and box constraints}, for all $\nu$, and X is defined by box constraints as in \eqref{eq: box}.

All experiments were carried out on an Intel Core i7-4702MQ CPU @ 2.20GHz x 8 with Ubuntu 14.04 LTS 64-bit and by using Matlab 7.14.0.739 (R2012a).

In all our implementations, we computed a point in $\hat x^\nu(\xbold^{-\nu})$, defined in \eqref{eq: best response}, by using \verb ga , which is the genetic algorithm available in the considered Matlab release. Such algorithm is an easy-to-use procedure that proved to be effective in all our tests.

For what concerns Algorithm \ref{alg: branching}:
 the strategy for picking up from and inserting in $\cal L$ was First In First Out;
 $\cal O$ was implemented by using \verb ga  with standard options;
 $\cal S$ was implemented by using a C version of the PATH solver, see \cite{dirkse1995path}, with a Matlab interface downloaded from \verb http://pages.cs.wisc.edu/~ferris/path/  and whose detailed description can be found in \cite{ferris1999interfaces}; entries of points, returned by $\cal S$, that were less than 1e-10 far from their nearest integer value were rounded to their nearest integer value; $\cal F$ was implemented as in Algorithm \ref{alg: procedure F}; $\cal C$ was implemented as in Algorithm \ref{alg: procedure C}.

Step (S.4) in Algorithms \ref{alg: lower bound} and \ref{alg: upper bound} was computed by using a simple line search.

About Algorithm \ref{alg: Jacobi}: we used Gauss-Seidel iterations; the starting guess was set by following the assumptions in Theorem \ref{th: Jacobi convergence} and by using the first row of $JF$ to define $G_1$ and $G_2$; we used $\xbold^k=\xbold^{k-1}$ as stopping criterion at (S.1); the best responses were computed, at (S.2), by using \verb ga  with standard options.

To create a benchmark for testing Algorithm \ref{alg: class},
we randomly generated problems with $JF$ positive definite and non symmetric by keeping to the following procedure.
Let $M \in \Re^{n\times n}$ be a symmetric positive definite matrix, let $h > 0$ and let $M^{\max}$ be the maximum among all entries modules $|M_{ij}|$. We made $M$ to be non symmetric for all off diagonal entries $M_{ij}$ and $M_{ji}$, corresponding to blocks $C$ in \eqref{eq: quadratic functions and box constraints}, by doing the following operations: (1) compute a random value $v_{ij}$ uniformly in $[-h M^{\max}, h M^{\max}]$, (2) set $M_{ij} := M_{ij} + v_{ij}$ and $M_{ji} := M_{ji} - v_{ij}$. It is clear that, after this transformation, $M$ is still positive definite. In our tests, we considered two values of $h$, one higher ($h = 0.1$) and one lower ($h = 0.01$), in order to obtain matrices having different degrees of asymmetry.
In Table \ref{tab: problem list} we report the list of discrete NEPs used to test Algorithm \ref{alg: class}.
\begin{table}[!ht]
\centering
 \begin{tabular}{l|cccccc|c}
  \hline
  {\bf prob} & $\mathbf{\lambda_{\min}}$ & $\mathbf{\lambda_{\max}}$ & $\mathbf{b_{\inf}}$ & $\mathbf{b_{\sup}}$ & $\mathbf{l}$ & $\mathbf{u}$ & $\mathbf{\# feas}$ \\\hline\hline
  G-2-1-A-$\star$ & 0.03 & 0.25 & -1 & 1 & -500 & 500 & 1.002e+06
 \\\hline
  G-3-1-A-$\star$ & 0.04 & 1.29 & -1 & 1 & -50 & 50 & 1.030e+06
 \\\hline
  G-4-1-A-$\star$, G-2-2-A-$\star$ & 0.18 & 14.05 & -1 & 1 & -5 & 5 & 1.464e+04
 \\\hline
  G-6-1-A-$\star$, G-3-2-A-$\star$, G-2-3-A-$\star$ & 0.15 & 5.44 & -1 & 0 & 0 & 5 & 4.666e+04 \\\hline\hline
  G-2-1-B-$\star$ & 0.01 & 4.47 & -10 & 10 & -1000 & 1000 & 4.004e+06
 \\\hline
  G-3-1-B-$\star$ & 0.02 & 14.41 & -10 & 10 & -100 & 100 & 8.120e+06
 \\\hline
  G-4-1-B-$\star$, G-2-2-B-$\star$ & 0.41 & 18.14 & -10 & 10 & -10 & 10 & 1.945e+05 \\\hline
  G-6-1-B-$\star$, G-3-2-B-$\star$, G-2-3-B-$\star$ & 0.78 & 48.26 & -10 & 0 & 0 & 10 & 1.771e+06 \\\hline
 \end{tabular}
 \caption{Small test problems description. \label{tab: problem list}}
\end{table}
For each problem, defined in \eqref{eq: quadratic functions and box constraints} and \eqref{eq: box}, we indicate the following data:
{\bf prob} contains the problem name; $\mathbf{\lambda_{\min}}$ and $\mathbf{\lambda_{\max}}$ are the minimum and the maximum eigenvalues of the symmetric part of $JF$; $\mathbf{b_{\inf}}$ and $\mathbf{b_{\sup}}$ are the bounds with which all entries of $b^\nu \in \Re^{n_\nu}$ were generated by using a uniform distribution; $\mathbf{l}$ and $\mathbf{u}$ are the bounds of $X$; $\mathbf{\# feas}$ is the amount of feasible points.
Each name label tells us information about the type and dimensions of the problem. In particular, let $T$-$N$-$n$-$I$-$\star$ be a name label: $T$ indicates the problem type (problems in Table \ref{tab: problem list} are all of type ``G'', that is they are generic problems); $N$ is the amount of players; $n$ is the amount of variables for each player; $I$ indicates the specific instance; $\star$ indicates the value of $h$ (that is the degree of asymmetry of $JF$) and can be equal to ``H'' for $h=0.1$ or to ``L'' for $h=0.01$.

\begin{table}[!ht]
\centering
 \begin{tabular}{l|ccccccccc}
  \hline
 {\bf prob} & {\bf \#eq} & {\bf \%$_{1st}$} & {\bf \%$_{last}$} & {\bf \%$_{tot}$} & {\bf \%$_{LB}$} & {\bf \%$_F$} & {\bf O$_{1st}$} & {\bf O$_{last}$} & {\bf O$_{tot}$} \\\hline\hline
 ex \ref{ex: one} & 4 & 2.00 & 9.00 & 10.00 & 84.00 & 6.00 & 2 & 9 & 10 \\\hline
 ex \ref{ex: two} & 0 &  &  & 26.00 & 0.00 & 74.00 &  &  & 26 \\\hline
 ex \ref{ex: trap} & 3 & 6.25 & 50.00 & 56.25 & 0.00 & 43.75 & 1 & 8 & 9 \\\hline
 ex \ref{ex: class easy} & 2 & 0.01 & 0.01 & 14.22 & 9.09 & 76.69 & 1 & 2 & 2082 \\\hline\hline
 G-2-1-A-L & 3 & $<$ 0.01 & $<$ 0.01 & $<$ 0.01 & 99.99 & $<$ 0.01 & 2 & 7 & 7 \\\hline
 G-2-1-A-H & 2 & $<$ 0.01 & $<$ 0.01 & $<$ 0.01 & 99.99 & $<$ 0.01 & 2 & 3 & 4 \\\hline
 G-3-1-A-L & 8 & $<$ 0.01 & 0.01 & 0.01 & 99.93 & 0.06 & 1 & 141 & 142 \\\hline
 G-3-1-A-H & 5 & $<$ 0.01 & 0.01 & 0.01 & 99.97 & 0.02 & 4 & 78 & 108 \\\hline
 G-4-1-A-L & 11 & 0.01 & 4.95 & 8.73 & 36.36 & 54.91 & 2 & 725 & 1278 \\\hline
 G-4-1-A-H & 1 & 0.01 & 0.01 & 8.20 & 36.36 & 55.43 & 2 & 2 & 1201 \\\hline
 G-2-2-A-L & 3 & 0.03 & 1.76 & 8.68 & 36.36 & 54.96 & 4 & 258 & 1271 \\\hline
 G-2-2-A-H & 1 & 0.01 & 0.01 & 8.40 & 36.36 & 55.24 & 2 & 2 & 1230 \\\hline
 G-6-1-A-L & 5 & 0.01 & 0.17 & 3.60 & 88.89 & 7.51 & 5 & 78 & 1679 \\\hline
 G-6-1-A-H & 4 & $<$ 0.01 & 0.26 & 3.44 & 90.74 & 5.82 & 1 & 122 & 1605 \\\hline
 G-3-2-A-L & 4 & 0.01 & 0.17 & 3.59 & 88.89 & 7.52 & 5 & 78 & 1676 \\\hline
 G-3-2-A-H & 4 & 0.02 & 0.20 & 2.79 & 92.28 & 4.93 & 7 & 93 & 1301 \\\hline
 G-2-3-A-L & 3 & 0.01 & 0.17 & 3.59 & 88.89 & 7.52 & 5 & 78 & 1676 \\\hline
 G-2-3-A-H & 3 & 0.02 & 0.21 & 3.67 & 88.89 & 7.44 & 9 & 100 & 1713 \\\hline\hline
 G-2-1-B-L & 119 & $<$ 0.01 & 0.01 & 0.01 & 98.69 & 1.30 & 1 & 449 & 449 \\\hline
 G-2-1-B-H & 10 & $<$ 0.01 & $<$ 0.01 & $<$ 0.01 & 99.99 & $<$ 0.01 & 2 & 30 & 30 \\\hline
 G-3-1-B-L & 50 & $<$ 0.01 & 0.13 & 0.13 & 94.39 & 5.49 & 15 & 10185 & 10206 \\\hline
 G-3-1-B-H & 3 & $<$ 0.01 & $<$ 0.01 & $<$ 0.01 & 99.99 & $<$ 0.01 & 4 & 58 & 113 \\\hline
 G-4-1-B-L & 8 & 0.01 & 0.43 & 2.94 & 88.34 & 8.72 & 19 & 842 & 5714 \\\hline
 G-4-1-B-H & 0 &  &  & 2.40 & 66.83 & 30.77 &  &  & 4664 \\\hline
 G-2-2-B-L & 7 & $<$ 0.01 & 0.30 & 3.57 & 84.56 & 11.87 & 4 & 582 & 6940 \\\hline
 G-2-2-B-H & 1 & 0.04 & 0.04 & 4.34 & 79.59 & 16.06 & 69 & 69 & 8447 \\\hline
 G-6-1-B-L & 8 & $<$ 0.01 & 0.02 & 1.13 & 84.97 & 13.90 & 1 & 410 & 20015 \\\hline
 G-6-1-B-H & 8 & $<$ 0.01 & 0.07 & 0.59 & 93.69 & 5.72 & 3 & 1181 & 10446 \\\hline
 G-3-2-B-L & 7 & $<$ 0.01 & 0.06 & 1.14 & 84.97 & 13.89 & 1 & 979 & 20217 \\\hline
 G-3-2-B-H & 5 & $<$ 0.01 & 0.07 & 0.42 & 93.69 & 5.89 & 6 & 1237 & 7432 \\\hline
 G-2-3-B-L & 7 & $<$ 0.01 & 0.06 & 0.98 & 87.98 & 11.04 & 1 & 992 & 17366 \\\hline
 G-2-3-B-H & 4 & $<$ 0.01 & 0.01 & 0.44 & 92.79 & 6.77 & 6 & 121 & 7773 \\\hline
 \end{tabular}
 \caption{Results for Algorithm \ref{alg: class} (1/2). \label{tab: results 1 gen}}
\end{table}

\begin{table}[!ht]
\centering
 \begin{tabular}{l|cccccc}
  \hline
 {\bf prob} & {\bf iter} & {\bf t$_{LB}$} & {\bf t$_{1st}$} & {\bf t$_{last}$} & {\bf t$_{tot}$} & {\bf ga} \\\hline\hline
 ex \ref{ex: one} & 15 & $<$ 0.01 & 2.05 & 4.60 & 4.86 & 14 \\\hline
 ex \ref{ex: two} & 27 & $<$ 0.01 &  &  & 7.22 & 28 \\\hline
 ex \ref{ex: trap} & 13 & $<$ 0.01 & 0.51 & 3.07 & 3.33 & 13 \\\hline
 ex \ref{ex: class easy} & 2338 & $<$ 0.01 & 0.65 & 1.15 & 563.70 & 2117 \\\hline\hline
 G-2-1-A-L & 11 & $<$ 0.01 & 0.82 & 2.67 & 2.67 & 10 \\\hline
 G-2-1-A-H & 7 & $<$ 0.01 & 0.80 & 1.31 & 1.56 & 6 \\\hline
 G-3-1-A-L & 191 & $<$ 0.01 & 0.87 & 50.28 & 50.55 & 186 \\\hline
 G-3-1-A-H & 145 & $<$ 0.01 & 1.93 & 27.18 & 35.93 & 131 \\\hline
 G-4-1-A-L & 1598 & $<$ 0.01 & 2.00 & 280.12 & 467.00 & 1682 \\\hline
 G-4-1-A-H & 1514 & $<$ 0.01 & 1.69 & 1.69 & 428.63 & 1545 \\\hline
 G-2-2-A-L & 1588 & $<$ 0.01 & 1.41 & 72.77 & 344.31 & 1306 \\\hline
 G-2-2-A-H & 1514 & $<$ 0.01 & 0.87 & 0.87 & 333.00 & 1263 \\\hline
 G-6-1-A-L & 1989 & $<$ 0.01 & 4.01 & 51.52 & 809.02 & 2790 \\\hline
 G-6-1-A-H & 1823 & $<$ 0.01 & 1.80 & 76.37 & 757.40 & 2616 \\\hline
 G-3-2-A-L & 1983 & $<$ 0.01 & 2.31 & 30.35 & 510.24 & 1900 \\\hline
 G-3-2-A-H & 1494 & $<$ 0.01 & 2.69 & 32.55 & 388.70 & 1450 \\\hline
 G-2-3-A-L & 1980 & $<$ 0.01 & 2.06 & 25.01 & 465.69 & 1712 \\\hline
 G-2-3-A-H & 1966 & $<$ 0.01 & 3.36 & 30.31 & 473.35 & 1742 \\\hline\hline
 G-2-1-B-L & 687 & $<$ 0.01 & 0.56 & 191.82 & 191.83 & 752 \\\hline
 G-2-1-B-H & 51 & $<$ 0.01 & 0.78 & 11.70 & 11.70 & 46 \\\hline
 G-3-1-B-L & 11605 & $<$ 0.01 & 5.06 & 2785.80 & 2791.34 & 10532 \\\hline
 G-3-1-B-H & 155 & $<$ 0.01 & 1.60 & 17.84 & 33.57 & 128 \\\hline
 G-4-1-B-L & 6357 & $<$ 0.01 & 7.65 & 277.82 & 1753.61 & 6473 \\\hline
 G-4-1-B-H & 5289 & $<$ 0.01 &  &  & 1363.76 & 4963 \\\hline
 G-2-2-B-L & 7652 & $<$ 0.01 & 2.71 & 174.96 & 1948.00 & 7017 \\\hline
 G-2-2-B-H & 9012 & $<$ 0.01 & 19.89 & 19.89 & 2325.06 & 8500 \\\hline
 G-6-1-B-L & 22959 & $<$ 0.01 & 1.94 & 182.04 & 6536.53 & 22316 \\\hline
 G-6-1-B-H & 11662 & $<$ 0.01 & 2.45 & 394.01 & 3325.10 & 11708 \\\hline
 G-3-2-B-L & 23163 & $<$ 0.01 & 1.38 & 305.73 & 5585.99 & 20555 \\\hline
 G-3-2-B-H & 8395 & $<$ 0.01 & 2.28 & 349.63 & 2040.76 & 7575 \\\hline
 G-2-3-B-L & 20028 & $<$ 0.01 & 0.68 & 285.73 & 4756.06 & 17501 \\\hline
 G-2-3-B-H & 8865 & $<$ 0.01 & 2.00 & 34.45 & 2127.20 & 7829 \\\hline
 \end{tabular}
 \caption{Results for Algorithm \ref{alg: class} (2/2). \label{tab: results 2 gen}}
\end{table}

In Tables \ref{tab: results 1 gen} and \ref{tab: results 2 gen} we report the numerical results obtained by tackling all examples given in the paper and all problems in Table \ref{tab: problem list} with Algorithm \ref{alg: class}. More precisely: {\bf \#eq} is the amount of computed equilibria; {\bf \%$_{1st}$}, {\bf \%$_{last}$} and {\bf \%$_{tot}$} is the percentage of feasible points that were analyzed by Oracle ${\cal O}$ before the first equilibrium was found, the last equilibrium was found and list ${\cal L}$ was empty respectively; {\bf \%$_{LB}$} and {\bf \%$_F$} is the percentage of feasible points that was cut off by Algorithms \ref{alg: lower bound} and \ref{alg: upper bound} together and by Procedure ${\cal F}$ respectively; {\bf O$_{1st}$}, {\bf O$_{last}$} and {\bf O$_{tot}$} is the amount of feasible points that were analyzed by oracle ${\cal O}$ before the first equilibrium was computed, the last equilibrium was computed and list ${\cal L}$ was empty respectively; {\bf iter} is the total amount of iterations; {\bf t$_{LB}$}, {\bf t$_{1st}$}, {\bf t$_{last}$} and {\bf t$_{tot}$} is the elapsed CPU-time (in seconds) before Algorithms \ref{alg: lower bound} and \ref{alg: upper bound} stopped, the first equilibrium was computed, the last equilibrium was computed and the algorithm stopped respectively; {\bf ga} is the total amount of \verb ga  calls.

Results in Table \ref{tab: results 1 gen} show that almost always {\bf \%$_{last}$} is less than 0.50, that is the whole equilibrium set is computed by evaluating a tiny percentage of the feasible points. In any case, when {\bf \%$_{last}$} is bigger, {\bf O$_{last}$} is rather low. Most of the time {\bf \%$_{LB}$} plus {\bf \%$_F$} is over 95.00, this means that most feasible points were cut off by Algorithms \ref{alg: lower bound} and \ref{alg: upper bound} and by procedure ${\cal F}$. Therefore only a few percentage of the feasible points were evaluated by using the oracle.

Table \ref{tab: results 2 gen} shows that CPU-time used by Algorithms \ref{alg: lower bound} and \ref{alg: upper bound} is always negligible. We recall that procedure ${\cal S}$ was called one time per iteration. Therefore, by comparing {\bf iter} and {\bf ga} with {\bf t$_{tot}$}, we can easily deduce that, essentially, the whole elapsed time is consumed by \verb ga  calls. Moreover, note that, although {\bf O$_{tot}$} $\leq$ {\bf ga} $\leq$ $N${\bf O$_{tot}$}, most of the time {\bf ga} is just a bit bigger than {\bf O$_{tot}$}.

We tested Algorithm \ref{alg: Jacobi} on all examples given in the paper, on all problems in Table \ref{tab: problem list}, and on the big discrete NEPs described in Table \ref{tab: big problems}.
\begin{table}[!ht]
\centering
 \begin{tabular}{l|cccccc|c}
  \hline
  {\bf prob} & $\mathbf{\lambda_{\min}}$ & $\mathbf{\lambda_{\max}}$ & $\mathbf{b_{\inf}}$ & $\mathbf{b_{\sup}}$ & $\mathbf{l}$ & $\mathbf{u}$ & $\mathbf{\# feas}$ \\\hline\hline
  G-10-2-A-H & 0.01 & 2.33 & -10 & 10 & -5 & 5 & 6.727e+20
 \\\hline
  G-10-2-B-H & $<$ 0.01 & 2.75 & -10 & 10 & -5 & 5 & 6.727e+20
 \\\hline
  C-10-2 & 0.15 & 2.54 & -10 & 10 & -5 & 5 & 6.727e+20
 \\\hline
  C-8-10 & 0.01 & 2309.75 & -0.1 & 0.1 & -3 & 3 & 4.054e+67
 \\\hline
  C-20-5 & 0.10 & 362.08 & -10 & 10 & -10 & 10 & 1.667e+132
 \\\hline
  C-200-5 & 0.06 & 207.03 & -1000 & 1000 & -6 & 0 & $7^{1000}$
 \\\hline
 \end{tabular}
 \caption{Big test problems description. \label{tab: big problems}}
\end{table}
As above, the first character in the name label of the problems indicates the type of the problem. In Table \ref{tab: big problems}, some problems are of type ``C'', which indicates that these problems were randomly generated in order to belong to the class of \emph{2-groups partitionable} discrete NEPs (defined in Section \ref{sec:class}). Note that we are dealing with problems up to 1000 variables and $7^{1000}$ feasible points.

In Table \ref{tab: results class} we report the numerical results obtained by using Algorithm \ref{alg: Jacobi} to compute one equilibrium of the discrete NEPs considered. As above {\bf iter} is the total amount of iterations and {\bf ga} is the total amount of \verb ga  calls, while, {\bf t} is the elapsed CPU-time (in seconds) and {\bf G$_1$-G$_2$} has a checkmark if the problem is \emph{2-groups partitionable} (see Section \ref{sec:class}).
\begin{table}[!ht]
\centering
\begin{minipage}{0.49\textwidth}
\small
 \begin{tabular}{l|c|ccc}
  \hline
 {\bf prob} & {\bf G$_1$-G$_2$} & {\bf iter} & {\bf t} & {\bf ga} \\\hline\hline
 ex \ref{ex: one} & $\checkmark$ & 3 & 1.48 & 6 \\\hline
 ex \ref{ex: two} & & \multicolumn{3}{c}{failure} \\\hline
 ex \ref{ex: trap} & $\checkmark$ & 1 & 0.50 & 2 \\\hline
 ex \ref{ex: class easy} & $\checkmark$ & 2 & 1.13 & 4 \\\hline\hline
 G-2-1-A-L & $\checkmark$ & 15 & 7.42 & 30 \\\hline
 G-2-1-A-H & $\checkmark$ & 15 & 7.36 & 30 \\\hline
 G-3-1-A-L & $\checkmark$ & 16 & 12.29 & 48 \\\hline
 G-3-1-A-H & & 17 & 12.89 & 51 \\\hline
 G-4-1-A-L & & 5 & 5.21 & 20 \\\hline
 G-4-1-A-H & & 6 & 6.27 & 24 \\\hline
 G-2-2-A-L & & 4 & 2.47 & 8 \\\hline
 G-2-2-A-H & & \multicolumn{3}{c}{failure} \\\hline
 G-6-1-A-L & & 4 & 6.72 & 24 \\\hline
 G-6-1-A-H & & 5 & 8.62 & 30 \\\hline
 G-3-2-A-L & & 4 & 3.18 & 12 \\\hline
 G-3-2-A-H & & 5 & 3.83 & 15 \\\hline
 G-2-3-A-L & & 3 & 1.57 & 6 \\\hline
 G-2-3-A-H & & 3 & 1.58 & 6 \\\hline\hline
 G-2-1-B-L & $\checkmark$ & 250 & 123.26 & 500 \\\hline
\end{tabular}
\end{minipage}
\begin{minipage}{0.49\textwidth}
\hspace{-1em}
\small
 \begin{tabular}{l|c|ccc}
  \hline
 {\bf prob} & {\bf G$_1$-G$_2$} & {\bf iter} & {\bf t} & {\bf ga} \\\hline\hline
 G-2-1-B-H & $\checkmark$ & 43 & 21.37 & 86 \\\hline
 G-3-1-B-L & $\checkmark$ & 7 & 5.36 & 21 \\\hline
 G-3-1-B-H & & 12 & 9.43 & 36 \\\hline
 G-4-1-B-L & & 4 & 4.22 & 16 \\\hline
 G-4-1-B-H & & \multicolumn{3}{c}{failure} \\\hline
 G-2-2-B-L & & 4 & 1.99 & 8 \\\hline
 G-2-2-B-H & & 7 & 3.52 & 14 \\\hline
 G-6-1-B-L & & 4 & 6.75 & 24 \\\hline
 G-6-1-B-H & & 4 & 6.93 & 24 \\\hline
 G-3-2-B-L & & 4 & 3.08 & 12 \\\hline
 G-3-2-B-H & & 5 & 3.82 & 15 \\\hline
 G-2-3-B-L & & 5 & 2.62 & 10 \\\hline
 G-2-3-B-H & & 3 & 1.59 & 6 \\\hline\hline
 G-10-2-A-H & & 6 & 17.53 & 60 \\\hline
 G-10-2-B-H & & 5 & 14.61 & 50 \\\hline
 C-10-2 & $\checkmark$ & 5 & 14.62 & 50 \\\hline
 C-8-10 & $\checkmark$ & 2 & 11.10 & 16 \\\hline
 C-20-5 & $\checkmark$ & 6 & 54.63 & 120 \\\hline
 C-200-5 & $\checkmark$ & 9 & 2197.44 & 1800 \\\hline
\end{tabular}
\end{minipage}
 \caption{Results for Algorithm \ref{alg: Jacobi}. \label{tab: results class}}
\end{table}
Note that, although many problems do not satisfy conditions of Theorem \ref{th: Jacobi convergence}, nonetheless, Algorithm \ref{alg: Jacobi} always computed one equilibrium of the discrete NEP, except in three cases. In particular, it properly failed on ex \ref{ex: two} and G-4-1-B-H, since these discrete NEPs do not have any equilibrium, and it failed on G-2-2-A-H, that, anyway, does not satisfy assumptions of Theorem \ref{th: Jacobi convergence}. As concerns performances, Algorithm \ref{alg: Jacobi} compares with Algorithm \ref{alg: class} on small problems.
\begin{table}[!ht]
\centering
 \begin{tabular}{l|cccccc}
  \hline
 {\bf prob} & {\bf \%$_{1st}$} & {\bf O$_{1st}$} & {\bf iter} & {\bf t$_{LB}$} & {\bf t$_{1st}$} & {\bf ga} \\\hline\hline
 G-10-2-A-H & $<$ 0.01 & 38 & 126 & $<$ 0.01 & 47.58 & 158 \\\hline
 G-10-2-B-H & $<$ 0.01 & 167 & 337 & $<$ 0.01 & 117.51 & 395 \\\hline
 C-10-2 & $<$ 0.01 & 146 & 2192 & 0.02 & 138.17 & 467 \\\hline
 C-8-10 & \multicolumn{6}{c}{failure} \\\hline
 C-20-5 & \multicolumn{6}{c}{failure} \\\hline
 C-200-5 & \multicolumn{6}{c}{failure} \\\hline
 \end{tabular}
 \caption{Results for Algorithm \ref{alg: class} on big problems. \label{tab: results big}}
\end{table}
Table \ref{tab: results big} shows performances of Algorithm \ref{alg: class} when computing one equilirium of the big problems in Table \ref{tab: big problems}. There, we considered a failure a run that took more than 5000 seconds to stop.
Concluding, Algorithm \ref{alg: Jacobi} seems to be a big deal faster than the branching algorithm to compute one equilibrium of big discrete NEPs.

\section{Conclusions and directions for future research}

In this paper, we propose the first branching method to compute the whole solution set of any NEP with discrete strategy sets. This method works well on small and medium problems by efficiently computing all equilibria without examining more than a tiny percentage of the feasible points.
Futhermore a class of discrete games is defined for which we prove that a Jacobi-type algorithm converges to one of their equilibria. This algorithm is quite fast and works very well also on big problems.

Note that, although in this paper we do not tackle NEPs with mixed integer optimization problems, it is straightforward to extend our results to these more general games. In future work, we plan to address these mixed integer NEPs and to develop new methods for equilibrium selection in a mixed integer setting.

\bibliographystyle{plain}
\addcontentsline{toc}{chapter}{Bibliography}
\bibliography{Surbib}

\begin{thebibliography}{10}

\bibitem{basar1995dynamic}
T.~Basar and G.J. Olsder.
\newblock {\em Dynamic noncooperative game theory}, volume 200.
\newblock SIAM.

\bibitem{belotti2013mixed}
P.~Belotti, C.~Kirches, S.~Leyffer, J.~Linderoth, J.~Luedtke, and A.~Mahajan.
\newblock Mixed-integer nonlinear optimization.
\newblock {\em Acta Numerica}, 22:1--131, 2013.

\bibitem{belotti2009branching}
P.~Belotti, J.~Lee, L.~Liberti, F.~Margot, and A.~W{\"a}chter.
\newblock Branching and bounds tighteningtechniques for non-convex minlp.
\newblock {\em Optimization Methods \& Software}, 24(4-5):597--634, 2009.

\bibitem{bikhchandani1997competitive}
S.~Bikhchandani and J.W. Mamer.
\newblock Competitive equilibrium in an exchange economy with indivisibilities.
\newblock {\em Journal of economic theory}, 74(2):385--413, 1997.

\bibitem{CottPangStoBk}
R.W. Cottle, J.-S. Pang, and R.E. Stone.
\newblock {\em The Linear Complementarity Problem}.
\newblock Academic Press, 1992.

\bibitem{dirkse1995path}
S.P. Dirkse and M.C. Ferris.
\newblock The path solver: a nommonotone stabilization scheme for mixed
  complementarity problems.
\newblock {\em Optimization Methods and Software}, 5(2):123--156, 1995.

\bibitem{dreves2011solution}
A.~Dreves, F.~Facchinei, C.~Kanzow, and S.~Sagratella.
\newblock On the solution of the {KKT} conditions of generalized {N}ash
  equilibrium problems.
\newblock {\em SIAM Journal on Optimization}, 21(3):1082--1108, 2011.

\bibitem{dreves2011nonsmooth}
A.~Dreves and C.~Kanzow.
\newblock Nonsmooth optimization reformulations characterizing all solutions of
  jointly convex generalized nash equilibrium problems.
\newblock {\em Computational Optimization and Applications}, 50(1):23--48,
  2011.

\bibitem{dreves2012nonsmooth}
A.~Dreves, C.~Kanzow, and O.~Stein.
\newblock Nonsmooth optimization reformulations of player convex generalized
  {N}ash equilibrium problems.
\newblock {\em Journal of Global Optimization}, 53(4):587--614, 2012.

\bibitem{FacchKanSu}
F.~Facchinei and C.~Kanzow.
\newblock Generalized {N}ash equilibrium problems.
\newblock {\em Annals of Operations Research}, 175(1):177--211, 2010.

\bibitem{facchinei2014solving}
F.~Facchinei, C.~Kanzow, and S.~Sagratella.
\newblock Solving quasi-variational inequalities via their {KKT} conditions.
\newblock {\em Mathematical Programming}, 144(1-2):369--412, 2014.

\bibitem{facchinei2011partial}
F.~Facchinei and L.~Lampariello.
\newblock Partial penalization for the solution of generalized {N}ash
  equilibrium problems.
\newblock {\em Journal of Global Optimization}, 50(1):39--57, 2011.

\bibitem{facchlampsagr2011sur}
F.~Facchinei, L.~Lampariello, and S.~Sagratella.
\newblock Recent advancements in the numerical solution of generalized {N}ash
  equilibrium problems.
\newblock {\em Quaderni di Matematica - Volume in ricordo di Marco {D'}Apuzzo},
  27:137--174, 2012.

\bibitem{FacchPangBk}
F.~Facchinei and J.-S. Pang.
\newblock {\em Finite-{D}imensional {V}ariational {I}nequalities and
  {C}omplementarity {P}roblems}.
\newblock Springer, 2003.

\bibitem{FacchPiccScia11}
F.~Facchinei, V.~Piccialli, and M.~Sciandrone.
\newblock Decomposition algorithms for generalized potential games.
\newblock {\em Computational Opt. and Appl.}, 50(2):237--262, 2011.

\bibitem{facchinei2011computation}
F.~Facchinei and S.~Sagratella.
\newblock On the computation of all solutions of jointly convex generalized
  {N}ash equilibrium problems.
\newblock {\em Optimization Letters}, 5(3):531--547, 2011.

\bibitem{ferris1999interfaces}
M.C. Ferris and T.S. Munson.
\newblock Interfaces to {PATH} 3.0: Design, implementation and usage.
\newblock {\em Computational Optimization and Applications}, 12(1-3):207--227,
  1999.

\bibitem{gabriel2013solving}
S.A. Gabriel, A.J. Conejo, C.~Ruiz, and S.A. Siddiqui.
\newblock Solving discretely constrained, mixed linear complementarity problems
  with applications in energy.
\newblock {\em Computers \& Operations Research}, 40(5):1339--1350, 2013.

\bibitem{gabriel2013solving2}
S.A. Gabriel, S.A. Siddiqui, A.J. Conejo, and C.~Ruiz.
\newblock Solving discretely-constrained {N}ash--{C}ournot games with an
  application to power markets.
\newblock {\em Networks and Spatial Economics}, 13(3):307--326, 2013.

\bibitem{van2007computing}
G.~Van~Der Laan, D.~Talman, and Z.~Yang.
\newblock Computing integral solutions of complementarity problems.
\newblock {\em Discrete Optimization}, 4(3):315--321, 2007.

\bibitem{NabTsengFuk09}
K.~Nabetani, P.~Tseng, and M.~Fukushima.
\newblock Parametrized variational inequality approaches to generalized {N}ash
  equilibrium problems with shared constraints.
\newblock {\em Computational Opt. and Appl.}, 48(3):423--452, 2011.

\bibitem{Nash50}
J.F. Nash.
\newblock Equilibrium points in n-person games.
\newblock {\em Proc. of the National Academy of Sciences}, 36(1):48--49, 1950.

\bibitem{Nash51}
J.F. Nash.
\newblock Non-cooperative games.
\newblock {\em Annals of Mathematics}, 54:286--295, 1951.

\bibitem{nowak2006relaxation}
I.~Nowak.
\newblock {\em Relaxation and decomposition methods for mixed integer nonlinear
  programming}, volume 152.
\newblock Springer Science \& Business Media, 2006.

\bibitem{FukPang09}
J.-S. Pang and M.~Fukushima.
\newblock Quasi-variational inequalities, generalized {N}ash equilibria, and
  multi-leader-follower games.
\newblock {\em Computational Management Science}, 2:21--56, 2009.

\bibitem{tawarmalani2002convexification}
M.~Tawarmalani and N.V. Sahinidis.
\newblock {\em Convexification and global optimization in continuous and
  mixed-integer nonlinear programming: theory, algorithms, software, and
  applications}, volume~65.
\newblock Springer Science \& Business Media, 2002.

\bibitem{tirole1988theory}
J.~Tirole.
\newblock {\em The theory of industrial organization}.
\newblock MIT press, 1988.

\bibitem{topkis1998supermodularity}
D.M. Topkis.
\newblock {\em Supermodularity and complementarity}.
\newblock Princeton university press, 1998.

\bibitem{yang2008solutions}
Z.~Yang.
\newblock On the solutions of discrete nonlinear complementarity and related
  problems.
\newblock {\em Mathematics of Operations Research}, 33(4):976--990, 2008.

\end{thebibliography}

\end{document}